\newtheorem{theorem}{Theorem}[section]
\newtheorem{proposition}{Proposition}[section]
\newtheorem{corollary}{Corollary}[section]
\newtheorem{question}{Question}[section]
\newtheorem{definition}{Definition}[section]
\newtheorem{remark}{Remark}[section]
\numberwithin{equation}{section}
\newcommand{\CC}{\mathbb{C}}
\newcommand{\RR}{\mathbb{R}}
\newcommand{\cP}{\mathcal{P}}
\begin{document}

\title{Functions holomorphic along a $C^1$ pencil of 
holomorphic discs}

%    Information for the authors
\author{Ye-Won Luke Cho and Kang-Tae Kim}

%    General info
\subjclass[2010]{32A10, 32M25, 32S65, 32A05, 32U20}

\keywords{Forelli's theorem, Complex-analyticity, Formal power series}

\thanks{Research of the authors is supported in part by NRF Grant
4.0019528 of The Republic of Korea (South).}

\begin{abstract}
The main purpose of this article is to present a generalization of Forelli's 
theorem for the functions holomorphic along a general pencil of 
holomorphic discs.  This generalizes the main result of \cite{JKS13} and the original Forelli's theorem, and furthermore, answers one of the problems posed in \cite{Chirka06}.
\end{abstract}

\maketitle

\section{Introduction and main theorems}

Denote by $B^n (p; r) := \{z \in \CC^n \colon \|z-p\| < r \}$ and by
$S^m := \{v \in \RR^{m+1} \colon \|v\|=1\}$.  With such notation,
the boundary of $B^n (0; 1)$ is $S^{2n-1}$.  Then the well known
Forelli's theorem \cite{Forelli77} states

\begin{theorem}[Forelli] \label{Forelli-original}
If a function $f\colon B^n (0; 1) \to \CC$ satisfies the following 
two conditions
\begin{enumerate}
\item $f\in C^{\infty}(0)$, meaning that for any positive integer $k$ there
exists an open neighborhood $V_k$ of the origin $0$ such that $f \in C^k 
(V_k)$, and
\item $f_v (\zeta) := f (\zeta v)$ is holomorphic on $B^1 (0;1)$ for every 
$v \in \CC^n$ with $\|v\|=1$,
\end{enumerate}
then $f$ is holomorphic on $B^n(0;1)$.
\end{theorem}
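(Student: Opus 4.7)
The plan is to (i) form the formal Taylor series of $f$ at the origin from hypothesis~(1), (ii) show that slice holomorphy (hypothesis~(2)) forces this series to contain only holomorphic monomials, (iii) prove that the resulting holomorphic formal series converges on $B^n(0;1)$, and (iv) identify $f$ with its sum by a second use of slice holomorphy.

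For steps (i)--(ii): since $f\in C^k(V_k)$ for each $k$, the mixed partials $\partial^\alpha\bar\partial^\beta f(0)$ all exist, giving a formal series
$$\hat f(z) = \sum_{\alpha,\beta} c_{\alpha,\beta}\, z^\alpha \bar z^\beta, \qquad c_{\alpha,\beta}:=\frac{1}{\alpha!\,\beta!}\,\partial^\alpha\bar\partial^\beta f(0),$$
whose Taylor polynomials $P_N$ satisfy $f-P_N=o(\|z\|^N)$ at $0$. For each $v\in S^{2n-1}$, hypothesis~(2) yields $\bar\partial_\zeta^k\partial_\zeta^j f_v(0)=0$ for $k\geq 1$. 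By the chain rule this amounts to the identical vanishing on $S^{2n-1}$ of the $(j,k)$-homogeneous polynomial $v\mapsto\sum_{|\alpha|=j,|\beta|=k} c_{\alpha,\beta}\, v^\alpha\bar v^\beta$; homogeneity extends the vanishing to $\CC^n$, and linear independence of the monomials $v^\alpha\bar v^\beta$ forces $c_{\alpha,\beta}=0$ whenever $|\beta|\geq 1$. Hence $\hat f(z)=\sum_{j\geq 0} Q_j(z)$, with $Q_j$ a holomorphic homogeneous polynomial of degree $j$.

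For step (iii): set $\phi_j(z):=\frac{1}{j}\log|Q_j(z)|$, which is plurisubharmonic on $\CC^n$ and satisfies $\phi_j(rz)=\log r+\phi_j(z)$ for $r>0$. For each $v\in S^{2n-1}$, the series $\sum_j Q_j(v)\zeta^j$ is the Taylor expansion at $0$ of the function $f_v$ holomorphic on $B^1(0;1)$, so Cauchy--Hadamard gives $\limsup_j\phi_j(v)\leq 0$. Letting $\psi:=\limsup_j\phi_j$ and $\psi^*$ its upper semicontinuous regularization, a Hartogs-type lemma in pluripotential theory ensures that $\psi^*$ is plurisubharmonic on $\CC^n$ and still satisfies $\psi^*\leq 0$ on $S^{2n-1}$ (quasi-everywhere suffices in view of the logarithmic homogeneity). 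The homogeneity then promotes the sphere bound to $\psi^*(z)\leq\log\|z\|$ throughout $\CC^n$, yielding $|Q_j(z)|\leq(\|z\|+\varepsilon)^j$ on $B^n(0;1-\varepsilon)$ for each $\varepsilon>0$, so that $g(z):=\sum_j Q_j(z)$ is holomorphic on $B^n(0;1)$.

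For step (iv) and the main obstacle: $f$ and $g$ share the same formal Taylor expansion at $0$, so $f-g$ vanishes to infinite order at $0$ in the $C^\infty$-at-$0$ sense. For each $v\in S^{2n-1}$ the one-variable function $\zeta\mapsto(f-g)(\zeta v)$ is holomorphic on $B^1(0;1)$ with a zero of infinite order at $\zeta=0$, hence vanishes identically; since every point of $B^n(0;1)$ lies on such a slice, $f\equiv g$ on the ball, proving holomorphy. The crux is step (iii): upgrading the fiberwise bound $\limsup_j\phi_j(v)\leq 0$ on $S^{2n-1}$ to a global estimate on the regularized limsup is where the argument is most delicate, and replacing the pencil of complex lines through $0$ by a $C^1$ pencil of discs (as required by the main theorem of the paper) should demand a substantially more flexible version of this pluripotential-theoretic step.
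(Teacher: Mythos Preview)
Your argument is correct and follows the same broad strategy as the paper (holomorphic formal series, then a pluripotential-theoretic convergence argument), but the two diverge at the convergence step. The paper does not prove Theorem~\ref{Forelli-original} directly: it establishes the more general Theorem~\ref{CK20a} for an arbitrary open $U\subset S^{2n-1}$ via a Lelong--Siciak normality criterion (Theorem~\ref{generalized Lelong}) that only yields convergence of $S_f$ on some small ball $B^n(0;r)$, and then recovers the full $B^n(0;1)$ through Hartogs' lemma. Your step~(iii) instead exploits the homogeneity $\phi_j(\lambda z)=\log|\lambda|+\phi_j(z)$ and the fact that \emph{every} $v\in S^{2n-1}$ is available to push the bound $\psi^*(z)\le\log\|z\|$ out to the whole unit ball in one shot, and your step~(iv) replaces Hartogs' lemma by the one-variable identity principle on each slice. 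This is tidier for the classical Forelli statement but, as you yourself note, does not survive the passage to a proper open $U\subsetneq S^{2n-1}$ or to a nonlinear pencil, which is exactly why the paper takes the small-ball-plus-Hartogs route.

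One technical point in your step~(iii) is left implicit: the Hartogs-type lemma you invoke requires $\{\phi_j\}$ to be locally uniformly bounded above, not merely that $\psi\le 0$ on the sphere. The paper supplies this through the trichotomy of Proposition~\ref{sequence of polynomials} (case~(2-b) is ruled out because $\psi<\infty$ on the non-pluripolar set $S^{2n-1}$). Your remark that ``quasi-everywhere suffices in view of the logarithmic homogeneity'' is also correct but compressed: the negligible set $\{\psi<\psi^*\}$ is pluripolar and, by homogeneity of both $\psi$ and $\psi^*$, a cone; were it to meet $S^{2n-1}$ in a relatively open set it would have nonempty interior in $\CC^n$, which is impossible.
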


Deferring the historical counts to a later section, we state the 
main theorems of this article.   

\begin{definition}[Pencil]  
\textup{
For a nonempty open subset $U$ of $S^{2n-1}$, consider 
$P_0(U) := \{ \lambda u \colon \lambda \in B^1 (0;1), u \in U \}$.
By a \textit{standard pencil} of holomorphic discs at $0\in \mathbb{C}^n$ 
we mean the pair $(P_0(U), \psi)$ where $\psi (u, \lambda) := \lambda u$ 
for each $u \in U$ defines a holomorphic map 
$\psi(u, \cdot)\colon B^1(0;1) \to B^n (0;1)$.
}

\textup{
More generally, we say that a pair $(\cP (p, U, \Omega), \varphi)$ is a 
\textit{$C^k$ pencil of holomorphic discs at $p\in \Omega$} 
($k >0$ integer) if $\cP (p, U, \Omega) = \varphi (P_0 (U))$ and 
$\varphi\colon P_0 (U) \to \Omega$ is a $C^k$ diffeomorphism onto
its image that satisfies:
\begin{enumerate}
\item $\varphi (0) = p$, and 
\item $\lambda \to \varphi(\lambda u)\colon B^1 (0;1) \to \Omega$ 
defines a holomorphic map.
\end{enumerate}
}
\end{definition}
Note that for each $u\in U$, the map $\lambda\to \varphi(\lambda u)$ is a holomorphic embedding.

\begin{theorem} \label{CK20a}
If a complex valued function $f\colon B^n (0;1) \to \CC$ satisfies 
the following two properties
\begin{enumerate}
\item $f \in C^\infty (0)$, and
\item $\lambda \to f(\lambda u)$ defines a holomorphic function 
in $\lambda\in B^1 (0;1)$ for every $u \in U$ for 
some nonempty open subset $U$ of $S^{2n-1}$,
\end{enumerate}
then there exists $r>0$ such that $f$ is holomorphic on 
$B^n (0; r) \cup P_0 (U)$.
\end{theorem}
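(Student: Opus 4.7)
The plan is to reduce the hypotheses to a formal-Taylor condition of Forelli type, invoke an analytic convergence argument in the spirit of \cite{JKS13} to secure holomorphy on a small ball around $0$, and then propagate along the pencil.

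\emph{Formal reduction.} Using condition (1), I would form for each $N$ the Taylor polynomial $\sum_{|\alpha|+|\beta|\le N} c_{\alpha\beta} z^\alpha \bar z^\beta$ of $f$ at $0$. Substituting $z=\lambda u$ and invoking condition (2), the coefficient of each $\lambda^m\bar\lambda^k$ with $k\ge 1$ must vanish for every $u\in U$; equivalently, the bihomogeneous polynomial $Q_{m,k}(u,\bar u):=\sum_{|\alpha|=m,\,|\beta|=k} c_{\alpha\beta} u^\alpha \bar u^\beta$ vanishes on $U$. Since $Q_{m,k}|_{S^{2n-1}}$ is real-analytic on the connected sphere and vanishes on a nonempty open subset, it vanishes on all of $S^{2n-1}$. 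Bihomogeneity, applied via $z=\|z\|\cdot(z/\|z\|)$, upgrades this to $Q_{m,k}(z,\bar z)\equiv 0$ on $\CC^n$, and linear independence of the monomials $z^\alpha\bar z^\beta$ (visible by applying $\partial^\alpha\bar\partial^\beta$ at $0$) forces $c_{\alpha\beta}=0$ whenever $|\beta|\ge 1$. Thus the formal Taylor series of $f$ at $0$ is actually a formal power series $\sum p_n(z)$ in $z$ alone, with $p_n$ a homogeneous polynomial of degree $n$.

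\emph{Convergence on a ball -- the main obstacle.} The heart of the proof is showing that this formal series has positive radius of convergence. Since $f$ is smooth only at the single point $0$, direct Cauchy estimates are unavailable; instead one exploits the subharmonicity of $\log|f(\lambda u)|$ in $\lambda\in B^1(0;1)$ together with the identity $p_n(u)=\tfrac{1}{n!}(d/d\lambda)^n|_{\lambda=0}f(\lambda u)$ to bound the growth of $|p_n|$. I would adapt the pluripotential-theoretic argument of \cite{JKS13} --- which treats the full-sphere case $U=S^{2n-1}$ --- to the open-subset setting, producing an $r>0$ and a holomorphic sum $F(z):=\sum_n p_n(z)$ on $B^n(0;r)$; Taylor-uniqueness at $0$ then identifies $F=f$ on $B^n(0;r)$.

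\emph{Extension to $P_0(U)$.} For each fixed $u\in U$, the holomorphic functions $\lambda\mapsto f(\lambda u)$ on $B^1(0;1)$ and $\lambda\mapsto\sum_n p_n(u)\lambda^n$ on a small disc agree near $\lambda=0$, so by the identity theorem they agree on all of $B^1(0;1)$. Thus $\sum p_n$ converges pointwise on $P_0(U)$ with sum $f$. To upgrade to local uniform convergence --- hence to joint holomorphy --- I would invoke the upper semicontinuity of $u\mapsto\limsup_n\frac{1}{n}\log|p_n(u)|$, a standard property of such radius-of-convergence functions via PSH regularization, which together with the homogeneity $p_n(\lambda u)=\lambda^n p_n(u)$ promotes the pointwise bound $\limsup_n|p_n(u)|^{1/n}\le 1$ on $U$ to a locally uniform one on compact subsets. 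This yields the desired joint holomorphy of $f$ on $B^n(0;r)\cup P_0(U)$.
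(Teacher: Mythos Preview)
Your three-step outline matches the paper's architecture, and Steps 1 and 3 are essentially correct. Your formal reduction via real-analytic continuation of $Q_{m,k}$ from $U$ to all of $S^{2n-1}$ is a valid alternative to the paper's Euler-field argument $\bar E S_f\equiv 0$ (Proposition~\ref{holo formal}); both yield a formal series $\sum_m p_m(z)$ in $z$ alone. Your Step 3 is the paper's Hartogs-lemma invocation spelled out: once Step 2 supplies convergence on some $B^n(0;r)$, homogeneity gives a uniform upper bound for $\tfrac1m\log|p_m|$ on $S^{2n-1}$, and the classical Hartogs lemma for locally bounded sequences of plurisubharmonic functions then upgrades the pointwise bound $\limsup_m|p_m(u)|^{1/m}\le 1$ on $U$ to a locally uniform one on the cone $P_0(U)$. (Your phrase ``upper semicontinuity'' is slightly off---the $\limsup$ is not USC in general---but Hartogs' lemma is the correct tool and delivers what you need.)

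The genuine gap is Step 2. You rightly label it ``the main obstacle'' but do not resolve it. The slice-wise Cauchy estimate indeed gives $\limsup_m|p_m(u)|^{1/m}\le 1$ pointwise for each $u\in U$, but passing from this pointwise information on a proper open subset of the sphere to a positive radius of convergence for $\sum_m p_m$ in $\CC^n$ is precisely the substantive contribution of the paper, and it is not supplied by \cite{JKS13} (which treats curved pencils over the \emph{full} sphere and does not face this issue). The paper's mechanism is the Zorn--Ree--Lelong theory of \emph{normal sets}: Theorem~\ref{generalized Lelong} shows, via the trichotomy of Proposition~\ref{sequence of polynomials} for the plurisubharmonic sequence $\tfrac1k\log|P_k|$ together with Siciak's logarithmic capacity, that $E\subset\CC^n$ is normal whenever the projectivized set $E'=\{(z_2/z_1,\ldots,z_n/z_1):z\in E,\ z_1\neq 0\}\subset\CC^{n-1}$ has positive capacity. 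Since for an open $U\subset S^{2n-1}$ the set $U'$ contains a Euclidean ball, this applies and produces the desired ball of convergence. Without supplying this capacity argument (or an equivalent), your Step 2 remains an assertion rather than a proof.
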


Notice that this, together with Hartogs' lemma (Sect.\ 5 of \cite{Chirka06}), 
implies Forelli's theorem above.

\begin{theorem} \label{CK20b}
If a domain $\Omega \subset \CC^n$ admits a $C^1$ pencil, say
$(\cP(p, U, \Omega), \varphi)$, of holomorphic discs at $p\in \Omega$,  then
any complex valued function $f\colon \Omega \to \CC$ 
satisfying the conditions
\begin{enumerate}
\item $f \in C^\infty (p)$, and 
\item $f$ is holomorphic along the pencil $(\cP(p, U, \Omega), \varphi)$, meaning
that $\lambda \to f(\varphi(\lambda u))$ is holomorphic 
in $\lambda \in B^1 (0;1)$ for any $u \in U$,
\end{enumerate}
is holomorphic on $B^n (0; r) \cup \cP(p, U, \Omega)$ for some
$r>0$.
\end{theorem}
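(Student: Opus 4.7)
My plan is to reduce to a formal-analytic argument at $p$, producing the holomorphic Taylor series of $f$ directly rather than composing $f$ with the only-$C^1$ map $\varphi$ (which would destroy the $C^\infty$-at-$p$ hypothesis needed to apply Theorem~\ref{CK20a} as a black box).

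First I normalize: translating so $p=0$, each leaf $\lambda\mapsto \varphi(\lambda u)$ is a holomorphic embedding with $\varphi(0)=0$. Differentiating at $\lambda=0$ and comparing real and imaginary parts gives $d\varphi_0(iu)=i\cdot d\varphi_0(u)$ for every $u\in U$. Since $U$ is open in $S^{2n-1}$, its $\RR$-span is all of $\CC^n$, so $A:=d\varphi_0$ is $\CC$-linear, and it is invertible since $\varphi$ is a diffeomorphism. Composing with $A^{-1}$ (a holomorphic linear change of coordinates) and replacing $f$ by $f\circ A$, I may assume $d\varphi_0=\text{id}$, so $\varphi(\lambda u)=\lambda u+\sum_{k\geq 2}\lambda^k a_k(u)$ with $a_k\colon U\to\CC^n$ continuous.

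Next, I analyze the formal Taylor series $\hat f=\sum c_{\alpha\beta}\,z^\alpha\bar z^\beta$ of $f$ at $0$. For each $u\in U$ the composition $F_u(\lambda):=f(\varphi(\lambda u))$ is $C^\infty$ in $\lambda$ (because $\varphi(\lambda u)$ is analytic in $\lambda$ and $f$ is $C^\infty$ at $0$), and its Taylor expansion at $\lambda=0$ agrees with the formal substitution $\hat f(\varphi(\lambda u))$. Holomorphy of $F_u$ forces every $\bar\lambda^N$-coefficient ($N\geq 1$) in this substitution to vanish for every $u\in U$. Using $\varphi(\lambda u)^\alpha=\lambda^{|\alpha|}u^\alpha+O(\lambda^{|\alpha|+1})$ and the conjugate expansion, the $\lambda^M\bar\lambda^N$-coefficient splits as a leading homogeneous polynomial $\sum_{|\alpha|=M,\,|\beta|=N}c_{\alpha\beta}\,u^\alpha\bar u^\beta$ plus contributions from multi-indices $(\alpha',\beta')$ with $|\alpha'|+|\beta'|<M+N$. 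Inducting on $M+N$ (base case $M=0,N=1$), the lower-order contributions with $|\beta'|\geq 1$ vanish by the inductive hypothesis, while those with $|\beta'|=0$ do not contribute to any $\bar\lambda^N$-coefficient with $N\geq 1$. Hence the leading polynomial vanishes on the open $U\subset S^{2n-1}$; since a polynomial in $(u,\bar u)$ vanishing on an open subset of the sphere is identically zero, $c_{\alpha\beta}=0$ whenever $\beta\neq 0$. Thus $\hat f(z)=\sum_\alpha c_\alpha z^\alpha$ is a formal series in $z$ alone.

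Finally, to obtain convergence and conclude, I use Cauchy's formula on each disc: $|F_u^{(k)}(0)/k!|\leq M(1-\epsilon)^{-k}$ uniformly for $u$ in a compact subset of $U$, with analogous bounds on $|a_m(u)|$. An induction on $k$ that peels off the contributions of $c_\alpha$ with $|\alpha|<k$ from $F_u^{(k)}(0)/k!$, combined with a Bernstein--Markov inequality extending a sup bound on the homogeneous polynomial $P_k(u)=\sum_{|\alpha|=k}c_\alpha u^\alpha$ from $U$ to all of $S^{2n-1}$, yields $|c_\alpha|\leq CR^{|\alpha|}$ for some $R$. Hence $g(z):=\sum_\alpha c_\alpha z^\alpha$ defines a holomorphic function on a ball $B^n(0;r)$. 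The difference $h:=f-g$ is $C^\infty$ at $0$ with zero Taylor series there and is holomorphic along each leaf $\varphi(D_u)$; flatness at $0$ in $\lambda$ then forces $h\equiv 0$ on every leaf, hence on $\cP(p,U,\Omega)$. Combined with holomorphy of $g$ on $B^n(0;r)$, this gives holomorphy of $f$ on $B^n(0;r)\cup\cP(p,U,\Omega)$.

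The main obstacle will be the convergence step. Because the leaves are curved, each $F_u^{(k)}(0)/k!$ is not just the homogeneous polynomial $P_k(u)$ but also carries nontrivial contributions from lower-degree $c_\alpha$'s multiplied by the curvature data $a_m(u)$. Orchestrating the induction so that the extra terms are absorbed without inflating the exponential constant, and simultaneously applying the Bernstein--Markov inequality on $S^{2n-1}$, is the delicate point on which the argument depends.
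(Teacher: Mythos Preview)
Your normalization (Step 1) and the inductive argument showing that the formal Taylor series $\hat f$ is of holomorphic type (Step 2) are correct and pleasantly direct. The difficulty, as you yourself flag, is entirely in Step 3, and there the argument does not close.

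Write $F_u(\lambda)=\sum_k Q_k(u)\lambda^k$ and $P_k(u)=\sum_{|\alpha|=k}c_\alpha u^\alpha$. Since the leaves are curved, $Q_k(u)=P_k(u)+R_k(u)$ where $R_k$ involves $c_\alpha$ with $|\alpha|<k$ together with the Taylor coefficients $a_m(u)$ of $\varphi(\lambda u)$. Cauchy on the leaf gives $|Q_k(u)|\le M\rho^{-k}$ on a compact $K\subset U$. To bound $|R_k|$ you need bounds on the individual $c_\alpha$, which you obtain from $\sup_K|P_j|$ via Bernstein--Markov (equivalently, via the Siciak extremal function $V_K^*$). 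For a \emph{proper} compact $K\subsetneq S^{2n-1}$ one has $\sup_{\bar B^n}V_K^*=\log D>0$, so passing from $K$ to the full sphere costs an unavoidable factor $D^j$ at degree $j$. Feeding this back, the recursion for $\tilde B_k:=\rho^k\sup_K|P_k|$ becomes
\[
\tilde B_k \;\le\; M(\rho/r)^k \;+\; \sum_{j<k} N_j\,\tilde B_j\,\gamma^j,\qquad \gamma=D\sqrt{n}\,(1+\rho B),
\]
and since $\gamma\ge D\sqrt{n}>1$ (for $n\ge 2$, or for any $n$ once $K\ne S^{2n-1}$), this forces $\tilde B_k$ to grow super-exponentially; no choice of the inductive constant $R$ makes $|c_\alpha|\le CR^{|\alpha|}$ hold. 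The curvature terms cannot be made small by shrinking $\rho$: the limiting value of $\gamma$ as $\rho\to 0$ is exactly $D\sqrt n$. In short, the Bernstein--Markov loss and the triangular feedback from lower degrees are genuinely incompatible, and the ``delicate point'' you identify is a real obstruction, not merely a bookkeeping nuisance.

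The paper avoids this entirely by never attempting convergence in the curved setting. Instead it shows (via the argument of \cite{JKS13}) that $f$ satisfies the full Cauchy--Riemann system on each leaf near $0$, uses Baire to find a \emph{subpencil} on whose underlying open set $f$ is CR, and then proves geometrically that this open set contains a \emph{standard} subpencil $P_0(W)\cap B^n(0;r)$. Along straight leaves there is no curvature, so $Q_k=P_k$ and Theorem~\ref{CK20a} (the Lelong/Siciak capacity argument) applies without any recursive loss. This detour through CR equations and a standard subpencil is precisely what buys the convergence that your direct approach cannot reach.

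A smaller point: even granting Steps 1--3, your Step 4 only yields $f=g$ on $\cP\cap B^n(0;r)$; extending holomorphy to all of $\cP(p,U,\Omega)$ still requires a Hartogs-type continuation along the $C^1$ family of discs, which you should invoke explicitly.
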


Notice that this answers the second question in Section 6 
of \cite{Chirka06} (p.\ 219).  It also generalizes the following:

\begin{theorem}[Joo-Kim-Schmalz \cite{JKS13}]\label{JKS13}
If $U = S^{2n-1}$ and if $f\colon\Omega \to \CC$ satisfies the conditions
(1) and (2) of the hypotheses of the preceding theorem, then $f$ is holomorphic on $\Omega$.
\end{theorem}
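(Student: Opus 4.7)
The plan is to obtain Theorem \ref{JKS13} as a direct specialization of the main result, Theorem \ref{CK20b}. The key observation is elementary: with $U = S^{2n-1}$, the set $P_0(U) = \{\lambda u : \lambda \in B^1(0;1),\ u \in S^{2n-1}\}$ coincides with the full open unit ball $B^n(0;1)$, so the $C^1$ diffeomorphism $\varphi\colon P_0(U)\to\Omega$ postulated by the pencil hypothesis restricts to a $C^1$ diffeomorphism from the full unit ball onto the pencil image $\cP(p,S^{2n-1},\Omega)$. In particular, this image is an open neighborhood of $p$ in $\Omega$, and the hypotheses of Theorem \ref{CK20b} are satisfied.

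Next, I would simply invoke Theorem \ref{CK20b}. It furnishes a radius $r>0$ such that $f$ is holomorphic on $B^n(p;r)\cup\cP(p,S^{2n-1},\Omega)$. Because $p$ lies in the interior of $\cP(p,S^{2n-1},\Omega)$, for $r$ small enough the Euclidean ball $B^n(p;r)$ is already absorbed into the pencil image, and the conclusion reduces to $f$ being holomorphic on the full pencil image $\cP(p,S^{2n-1},\Omega)$. Reading this under the natural JKS13 convention that the pencil exhausts the domain, i.e.\ $\varphi$ is onto $\Omega$, one arrives at the asserted holomorphicity of $f$ on $\Omega$.

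The reduction itself is essentially routine; all of the substantive work is concentrated in the proof of Theorem \ref{CK20b}, and no new technique is required to handle the case $U = S^{2n-1}$. The genuine obstacle, which is already overcome inside the paper when proving Theorem \ref{CK20b}, consists in producing a formal power series at $p$ whose coefficients are controlled along the merely $C^1$ family of holomorphic discs and then upgrading it to an honest holomorphic extension via a Hartogs-type argument. In the full-sphere case treated here, none of the additional flexibility required for a proper open $U\subsetneq S^{2n-1}$ is actually used, so Theorem \ref{JKS13} follows at once from the stronger statement.
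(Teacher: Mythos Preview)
Your derivation is correct and matches the paper's intent: the paper does not give an independent proof of Theorem~\ref{JKS13} but simply cites it from \cite{JKS13} and remarks that Theorem~\ref{CK20b} generalizes it, which is exactly the specialization $U=S^{2n-1}$ you carry out. Your observation that $P_0(S^{2n-1})=B^n(0;1)$, so that $\cP(p,S^{2n-1},\Omega)=\varphi(B^n(0;1))$ is an open neighborhood of $p$, and your explicit flagging of the convention $\cP(p,S^{2n-1},\Omega)=\Omega$ needed to reach the stated conclusion on all of $\Omega$, are both appropriate; nothing further is required.
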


\begin{remark} \textup{
The condition, which we denote by $f \in E(p)$, that $f$ admits a 
formal Taylor series at $p$ is weaker than $f \in C^\infty(p)$.  The 
precise definition that $f\in E(p)$ is as follows: for each positive integer 
$k$, there exists a polynomial $p_k (z,\bar z)$ with degree not more 
than $k$ satisfying
\[
f(z) - p_k (z,\bar z) = o(\|z\|^k),
\]
as $\|z\|$ tends to $0$.  We point out that the condition $f \in C^\infty (p)$ 
in the preceding theorems can be weakened to $f \in E(p)$ and $f$ is of 
class $C^1$ on a neighborhood of $p$. It will become evident, since 
those are the only assumptions we are working with throughout the paper.}
\end{remark}

\section{Structure of paper, and remarks} 

The original version of Forelli's theorem is concerned with the functions 
harmonic along the leaves of linear foliation of the ball $B^n (0;1)$ 
by the radial complex discs passing through the origin.  
Forelli proved that such a function,
say $\psi\colon B^n (0;1) \to \RR$ is pluriharmonic provided also
that $\psi \in C^\infty (0)$.  Then the proof arguments imply (with
only very minor adjustments) Theorem \ref{Forelli-original} 
as pointed out in \cite{Stoll80}.

There had been many attempts to weaken the condition 
$f \in C^\infty (0)$ for $f$ to finite differentiability. This may  have
sounded reasonable to a naive mind, as Hartogs' analyticity theorem
does not require any regularity assumptions beyond separate analyticity.
But no success was possible; numerous nonholomorphic examples of $f$ 
satisfying condition (2) of Theorem \ref{Forelli-original} were found, when 
condition (1) was replaced by the condition $f \in C^k$ for any finite 
$k$ \cite{KPS09}. On the other hand, there were quite a few successful 
attempts (more than 25 years after \cite{Forelli77}), starting with 
\cite{Chirka06}:  see \cite{KPS09, JKS13, JKS16}, just to name a few.  
Then there are related recent papers such as 
\cite{BoKu19, Krantz18}.  The second named author would like to 
acknowledge that professor A. Sadullaev \cite{Sadullaev} informed him of 
\cite{Madrak86}, even though we were unable to have an access to the paper.

We point out that the methods we introduce are elementary, to the
detail and different.  The starting point is with
the formal Taylor series of the given function $f:B^n(0;1)\to \mathbb{C}$ 
at the origin in both types of variables $z=(z_1, \ldots, z_n), 
\bar z = (\bar z_1, \ldots, \bar z_n)$. 
Then, under the assumptions of Theorem \ref{CK20a}, the formal 
power series turns out to be free of $\bar z$ terms.

Once this step is achieved, we use the answers to a question of Bochner
by Lelong \cite{Lelong51} (also by Zorn \cite{Zorn47} and Ree \cite{Ree49}).
A mild adjustment on Lelong's analysis \cite{Lelong51} leads us to an 
all dimensional principle which says that $f$ is holomorphic on 
$B^n (0;r)$ for some $r>0$.  Then the 
conclusion of Theorem \ref{CK20a} follows by Hartogs' lemma
(cf., e.g., \cite{Chirka06}).

At this juncture we, especially the second named author, would like to 
thank professors Takeo Ohsawa and Nessim Sibony for pointing out, in two
different occasions and with different suggestions, the possible relevance 
of \cite{Zorn47, Ree49, Lelong51}.
  
Once the standard pencil case is obtained, we show that the case of a general 
pencil of nonlinear Riemann surfaces follows by two important sources:
\begin{itemize}
\item[(1)] A variation (perhaps also minor) of the arguments of \cite{JKS13}. 
\item[(2)] Finding a standard pencil (in the given pencil) along which the given function is holomorphic.
\end{itemize}
On the other hand, we remark that we do not know how to carry out 
the arguments without the $C^1$ assumption on the foliation.

\section{Analysis with formal power series}

\subsection{Analysis with formal power series on standard pencil}

Note that the holomorphicity of $f:B^n(0;1)\to \CC$ along a standard 
pencil $(P_0(U),\psi)$ is the same as the following assumption:

\begin{itemize}
\item[(2')] $\bar E f \equiv 0$ on $P_0(U),$ where 
$E = \sum\limits_{k=1}^n z_k \frac\partial{\partial z_k}$.
\end{itemize}

Denote by $\CC[[z_1,\ldots,z_n,\bar{z}_1,\ldots,\bar{z}_n]]$ the ring 
of formal power series in the variables 
$z_1,\ldots,z_n,\bar{z}_1,\ldots,\bar{z}_n$ with coefficients in $\CC$.
Call $S \in \CC[[z_1,\ldots,z_n,\bar{z}_1,\ldots,\bar{z}_n]]$ \textit{of 
holomorphic type} if $S$ is free of variables $\bar{z}_1,\ldots,\bar{z}_n$.

\begin{proposition}\label{holo formal}
If $S = \sum C_{I}^{J}z^I\bar{z}^J$ is an element of   
$\CC[[z_1,\ldots,z_n,\bar{z}_1,\ldots,\bar{z}_n]]$ satisfying the equation 
$\bar{E}S\equiv 0$ on a 
nonempty open subset $V$ of $\mathbb{C}^n$, then $S$ is of 
holomorphic type, i.e., $C_{I}^{J}=0$ whenever $J\neq 0$.
\end{proposition}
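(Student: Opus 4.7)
The plan is to exploit the fact that the operator $\bar E = \sum_k \bar z_k \partial/\partial \bar z_k$ preserves the total-degree grading on $\CC[[z,\bar z]]$: a one-line computation gives $\bar E(z^I \bar z^J) = |J|\, z^I \bar z^J$, so $\bar E$ acts diagonally on the monomial basis and sends the homogeneous component of degree $m$ to another homogeneous component of degree $m$. Decomposing $S = \sum_{m \ge 0} S_m$ with
\[
S_m \;:=\; \sum_{|I|+|J|=m} C_I^J\, z^I \bar z^J,
\]
one therefore has $\bar E S = \sum_m \bar E S_m$ with each $\bar E S_m = \sum_{|I|+|J|=m} |J|\, C_I^J\, z^I \bar z^J$ an honest polynomial of total degree $m$.

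I would then read the hypothesis $\bar E S \equiv 0$ on $V$ degree-by-degree, which is the only sensible interpretation of a formal (possibly divergent) series on an open set: each polynomial $\bar E S_m$ vanishes on $V$. This is precisely the reading forced by the intended application, where $S$ is the formal Taylor series at $0$ of a function $f \in C^\infty(0)$ whose $\bar E$-derivative vanishes on $V$, so that the $N$-jet of $\bar E f$ at $0$ vanishes on $V$ for every $N$. Because a polynomial on $\CC^n$ (equivalently a polynomial in the real and imaginary parts of $z$) that vanishes on a nonempty open subset must be identically zero, we obtain $\bar E S_m \equiv 0$ on all of $\CC^n$ for every $m$.

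Matching coefficients of $z^I \bar z^J$ in the identity $\bar E S_m \equiv 0$ yields $|J|\, C_I^J = 0$ for every $(I,J)$ with $|I|+|J|=m$, and since $|J| \ne 0$ exactly when $J \ne 0$, this forces $C_I^J = 0$ whenever $J \ne 0$. Letting $m$ range over all nonnegative integers exhausts the coefficients of $S$ and shows $S$ is of holomorphic type. The only non-mechanical point in the whole argument is pinning down the correct degree-by-degree interpretation of the formal equation ``$\bar E S \equiv 0$ on $V$''; once that is in place, everything reduces to the elementary fact that a nonzero polynomial cannot vanish on a nonempty open subset of $\CC^n$, and no real obstacle remains inside this proposition.
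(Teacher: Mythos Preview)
Your argument is correct and essentially identical to the paper's: both reduce to the observation that $\bar E$ acts as multiplication by $|J|$ on monomials, interpret the formal identity $\bar E S\equiv 0$ on $V$ degree-by-degree, and then use that a polynomial vanishing on a nonempty open set is identically zero. The only cosmetic difference is that the paper works with the truncations $S_m=\sum_{|I|+|J|\le m}C_I^J z^I\bar z^J$ rather than the homogeneous components, but since $\bar E$ preserves total degree this amounts to the same thing.
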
 

\begin{proof}
Recall the multi-index notation as follows: 
\[\alpha=(\alpha_1,\ldots,\alpha_n), ~z^{\alpha}=z_1^{\alpha_1}
\cdots z_n^{\alpha_n}~\text{and}~|\alpha|:=\alpha_1+\cdots+\alpha_n.\]
We first prove the proposition in the case that $S$ is a polynomial of finite 
degree. Note that $\bar{E}S\equiv 0$ on $\mathbb{C}^n$, since $\bar{E}S$ is 
real analytic. Consider the monomial term $S_{IJ}$ in $S$ of multi-degree 
$(I, J) := (i_1,\ldots,i_n,j_1,\ldots,j_n)$ where $J=(j_1,\ldots,j_n)\neq 0$. 
The monomial term in $\bar{E}S$ of multi-degree $(I, J)$ is precisely 
$\bar E (S_{IJ})$.   Then $\bar{E}S \equiv 0$ implies
\[
|J|C_{i_1,\ldots,i_n}^{j_1,\ldots,j_n}=0.
\] 
Consequently, $C_{i_1,\ldots,i_n}^{j_1,\ldots,j_n}=0$, whenever 
$J=(j_1,\ldots,j_n) \neq 0$.

Now let $S=\sum C_{I}^{J}z^I\bar{z}^J$ be any formal power series satisfying 
$\bar E S \equiv 0$ on $V$. Then, for each nonnegative integer $m$, 
we have 
\[
\bar{E}(S_m)=(\bar{E}S)_m=0~\text{on}~V,
\] 
where 
\[
S_m:=\sum_{|I|+|J|\leq m}C_{I}^{J}z^I\bar{z}^J.
\] 
Thus we conclude from the previous arguments on finite polynomials that 
$S$ is of holomorphic type.
\end{proof}

\begin{corollary}
Let $f\colon B^n(0;1) \to \mathbb{C}$ be a complex valued function 
satisfying the hypothesis of Theorem \ref{CK20a}. Then, the formal 
	Taylor series $S_f$ of $f$ at the origin is of holomorphic type.
\end{corollary}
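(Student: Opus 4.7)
The plan is to compare the formal Taylor expansion of $f$ along the discs $\lambda u$, $u \in U$, with the genuine holomorphic expansion of $\lambda \mapsto f(\lambda u)$, derive coefficient-wise vanishing on $U$, extend it to $\mathbb{C}^n$ by bihomogeneity, and then read off the conclusion through Proposition~\ref{holo formal}.

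Concretely, write $S_f = \sum C_I^J z^I \bar z^J$ and let $p_m$ denote the order-$m$ Taylor polynomial of $f$ at $0$. For $u \in U$ the hypothesis of Theorem \ref{CK20a} gives, on the one hand, $f(\lambda u) = \sum_{k \leq m} a_k(u)\lambda^k + o(|\lambda|^m)$ from the holomorphic expansion in $\lambda$, and on the other hand $f(\lambda u) = p_m(\lambda u, \bar\lambda \bar u) + o(|\lambda|^m)$ from restricting the formal expansion to the ray. Subtracting, the polynomial $p_m(\lambda u, \bar\lambda \bar u) - \sum_{k \leq m} a_k(u)\lambda^k$ has degree at most $m$ in $(\lambda, \bar\lambda)$ and is $o(|\lambda|^m)$, and by uniqueness of the polar expansion of a polynomial (write $\lambda = re^{i\theta}$ and group by total degree in $r$) it must vanish identically. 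Extracting the coefficient of $\lambda^a \bar\lambda^b$ for $b > 0$ produces the identity
\[
\sum_{|I|=a,\ |J|=b} C_I^J\, u^I\, \bar u^J = 0 \qquad (u \in U,\ a \ge 0,\ b > 0).
\]

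The left-hand side is a bihomogeneous polynomial in $(u, \bar u)$, so its vanishing on the open subset $U$ of $S^{2n-1}$ extends by $\mathbb{R}_+$-homogeneity to the open cone over $U$ in $\mathbb{C}^n$, hence to all of $\mathbb{C}^n$; linear independence of the monomials $u^I \bar u^J$ then forces $C_I^J = 0$ whenever $J \neq 0$. Equivalently, $\bar E S_f$ vanishes as a formal series, so Proposition~\ref{holo formal} (applied to any nonempty open $V$) delivers that $S_f$ is of holomorphic type. The main obstacle lies in the middle step: coefficient information is available only along directions in $U$, and the bihomogeneous structure of each multi-degree component of $S_f$ is precisely what allows the partial, sphere-open identity to be upgraded into a global polynomial identity on $\mathbb{C}^n$; without the openness of $U$ in $S^{2n-1}$ this extension would fail.
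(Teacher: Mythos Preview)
Your proof is correct and takes essentially the same route as the paper. The paper treats this corollary as immediate from the reformulation~(2') (namely $\bar E f \equiv 0$ on $P_0(U)$) together with Proposition~\ref{holo formal}; you instead bypass the operator $\bar E$ and compare the two expansions of $f(\lambda u)$ directly, but the resulting identity $\sum_{|I|=a,\,|J|=b} C_I^J u^I\bar u^J = 0$ on the open cone over $U$ is exactly the content of Proposition~\ref{holo formal}'s proof, so your closing appeal to that proposition is redundant (you already have $C_I^J=0$ for $J\neq 0$) though harmless.
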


\subsection{Bochner's problem on Radius of convergence}
Now the proof of Theorem \ref{CK20a} is reduced to establishing the 
existence of a region of convergence of $S_f$ at the origin. The goal of this 
subsection is to introduce Theorem \ref{Lelong} which is a criterion on the 
analyticity of formal power series of holomorphic type in two complex 
variables (all dimensional principle will be handled in the next section). The 
line of research concerned with the theorem originates from the following 
question of Bochner (cf.\ \cite{Zorn47}): 
  
\begin{question}
Let $S=\sum a_{ij}z_1^iz_2^j$ be a formal power series with complex 
coefficients such that every substitution of convergent power series with 
complex coefficients $z_1=\sum b_it^i$, $z_2=\sum c_it^i$ produces a 
convergent power series in $t$.  Is $S$ convergent on some neighborhood 
of $0\in \mathbb{C}^2?$
\end{question}

This question was answered affirmatively by Zorn in the following form.
   
\begin{theorem}[\cite{Zorn47}]\label{Zorn}
Let $S=\sum a_{ij}z_1^iz_2^j$ be a formal power series with complex 
coefficients such that for any $(a,b)\in \mathbb{C}^2$, $S(at,bt)$ is a power 
series in the complex variable $t$ with a positive radius of convergence. Then 
there exists a neighborhood $U$ of $0\in \mathbb{C}^2$ such that $S$ is 
holomorphic on $U$.
\end{theorem}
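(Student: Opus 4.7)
The plan is to combine a Baire category argument on $\mathbb{C}^2$ with Cauchy's inequalities applied to the homogeneous graded pieces of $S$. Group the formal series by total degree: write $S = \sum_{k \geq 0} P_k$ with $P_k(a,b) := \sum_{i+j=k} a_{ij} a^i b^j$ homogeneous of degree $k$. The hypothesis that $S(at,bt) = \sum_{k} P_k(a,b)\,t^k$ has positive radius of convergence for each $(a,b) \in \mathbb{C}^2$ is equivalent to saying that for every $(a,b)$ there exist $N, \rho \in \mathbb{N}$ with $|P_k(a,b)| \leq N\rho^k$ for all $k$.

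For each pair $(N, \rho) \in \mathbb{N} \times \mathbb{N}$, set
\[
E_{N,\rho} := \bigcap_{k \geq 0} \bigl\{(a,b) \in \mathbb{C}^2 : |P_k(a,b)| \leq N\rho^k\bigr\},
\]
which is closed by continuity of each $P_k$. The reformulation above gives $\mathbb{C}^2 = \bigcup_{N,\rho} E_{N,\rho}$, so Baire's theorem produces a pair $(N_0, \rho_0)$ for which $E_{N_0,\rho_0}$ has nonempty interior, containing some Euclidean ball $B((a_0,b_0), r)$.

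It remains to transfer this local uniform bound into bounds on the original coefficients. The polydisc $D := \{(a,b) : |a-a_0| < r/\sqrt{2},\ |b-b_0| < r/\sqrt{2}\}$ lies in $B((a_0,b_0), r)$, so $|P_k| \leq N_0\rho_0^k$ on $D$. Cauchy's inequalities applied to the Taylor expansion of $P_k$ about $(a_0, b_0)$ bound each coefficient of order $(p,q)$ by $N_0\rho_0^k/(r/\sqrt{2})^{p+q}$. The decisive observation is that $P_k$ is homogeneous of degree $k$, so translation does not alter its top-degree part; consequently the order-$(p,q)$ Taylor coefficient of $P_k$ at $(a_0,b_0)$ with $p+q=k$ equals the original coefficient $a_{pq}$. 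Therefore
\[
|a_{ij}| \leq N_0 \Bigl(\frac{\sqrt{2}\,\rho_0}{r}\Bigr)^{i+j},
\]
and a geometric series comparison yields absolute convergence of $S$ on the polydisc $\{|z_1|,|z_2| < r/(\sqrt{2}\,\rho_0)\}$.

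The main conceptual point is the recognition that, for the homogeneous $P_k$, the top-degree Taylor coefficients at an arbitrary expansion point recover the original $a_{ij}$; once this is noted, the proof reduces to a routine Baire-plus-Cauchy estimate in the spirit of Osgood's classical treatment of Hartogs' theorem.
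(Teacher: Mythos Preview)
Your argument is correct, and the key observation---that homogeneity of $P_k$ forces the top-degree Taylor coefficients at \emph{any} center to coincide with the original $a_{ij}$---is exactly what makes the Cauchy estimate bite. The paper does not prove Zorn's theorem directly (it is cited from \cite{Zorn47}); however, its proof of the generalization Theorem~\ref{generalized Lelong} specializes to the Zorn setting and proceeds quite differently. There, following Lelong, one studies the plurisubharmonic functions $u_k=\tfrac{1}{k}\log|P_k|$ and their averages, invokes the trichotomy of Proposition~\ref{sequence of polynomials} together with a capacity argument (Theorem~\ref{sequence of pshfunctions}) to rule out the blow-up case, and only then obtains a uniform bound $|P_k|\le M^k$ on a polydisc from which the same Cauchy-estimate endgame follows. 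Your Baire-category route is more elementary and self-contained, and is essentially Zorn's own; its limitation is that it needs the hypothesis to hold on a set of second category (so that some $E_{N,\rho}$ has interior), whereas the paper's pluripotential-theoretic machinery is what allows the extension to direction sets of merely positive logarithmic capacity, as in Theorems~\ref{Lelong} and~\ref{generalized Lelong}.
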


Zorn also remarked in \cite{Zorn47} that it would be interesting to know 
whether Bochner's conjecture holds in the `real case'. Ree \cite{Ree49} 
clarified the meaning of the real case and generalized the work of Zorn as 
follows:
     
\begin{theorem}[\cite{Ree49}]\label{Ree}
Let $S=\sum a_{ij}z^i_1z^j_2$ be a formal power series with complex 
coefficients such that for any $(a,b)\in \mathbb{R}^2$, $S(at,bt)$ is a power 
series in the complex variable $t$ with a positive radius of convergence. Then 
there exists a neighborhood $U$ of $0\in \mathbb{C}^2$ such that $S$ is 
holomorphic on $U$.
\end{theorem}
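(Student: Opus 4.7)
The plan is to reduce Theorem \ref{Ree} to Zorn's theorem (Theorem \ref{Zorn}) by upgrading the convergence hypothesis from real lines through the origin to all complex lines through the origin. Writing $S = \sum_{n \geq 0} S_n$ with $S_n(z_1, z_2) = \sum_{i+j=n} a_{ij} z_1^i z_2^j$ the homogeneous part of degree $n$, the identity $S(at, bt) = \sum_n S_n(a,b)\, t^n$ recasts the hypothesis as: for every $(a,b) \in \RR^2$, $\limsup_n |S_n(a,b)|^{1/n} < \infty$. The goal is to establish the same inequality for every $(\zeta_1, \zeta_2) \in \CC^2$, at which point Theorem \ref{Zorn} immediately produces a neighborhood of $0 \in \CC^2$ on which $S$ is holomorphic.

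The first tool is Baire category. For each positive integer $M$, set
\[
E_M = \{(a,b) \in \RR^2 : |S_n(a,b)| \leq M^n \text{ for all } n \geq 1\}.
\]
Each $E_M$ is closed (as an intersection of closed conditions on the continuous functions $S_n$), and their union covers $\RR^2$. By the Baire category theorem some $E_{M_0}$ has nonempty interior; shrinking if necessary, I may assume this interior contains a nondegenerate product of real intervals $I \times J \subset \RR^2$ on which $|S_n| \leq M_0^n$ for every $n$.

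The second tool is the classical Chebyshev polynomial inequality. For fixed $y \in J$, the one-variable polynomial $\zeta_1 \mapsto S_n(\zeta_1, y)$ has degree at most $n$ and is bounded by $M_0^n$ on $I$; Chebyshev's inequality then gives $|S_n(\zeta_1, y)| \leq M_0^n\, C_I(\zeta_1)^n$ for every $\zeta_1 \in \CC$, with $C_I(\zeta_1)$ depending only on $I$ and $\zeta_1$ (and independent of $n$). Holding $\zeta_1$ fixed and applying the inequality once more in the second variable on $J$ yields
\[
|S_n(\zeta_1, \zeta_2)| \leq M_0^n\, C_I(\zeta_1)^n\, C_J(\zeta_2)^n \quad \text{for all } (\zeta_1, \zeta_2) \in \CC^2.
\]
Taking $n$-th roots shows $\limsup_n |S_n(\zeta_1, \zeta_2)|^{1/n}$ is finite for every complex pair, so $S(\zeta_1 t, \zeta_2 t)$ has positive radius of convergence in $t$ for all $(\zeta_1, \zeta_2) \in \CC^2$. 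Theorem \ref{Zorn} then finishes the proof.

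The main obstacle is the second step: the Chebyshev bound is what guarantees that the real sup-norm estimate $|S_n| \leq M_0^n$ on $I \times J$ extends to a complex estimate whose constant still grows only geometrically in $n$. A coarser polynomial extension would leave a factor like $n!$ or worse in place of $C_I(\zeta_1)^n C_J(\zeta_2)^n$ and destroy the limsup estimate; the Baire step alone cannot bridge the gap between real and complex lines, and it is essential that the open set produced by Baire be genuinely two-dimensional so that a product-of-intervals can be found inside it.
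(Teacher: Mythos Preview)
Your argument is correct and is essentially Ree's original 1949 proof: Baire category produces a real rectangle $I\times J$ on which the homogeneous components satisfy $|S_n|\le M_0^n$, and then the one-variable Chebyshev extremal inequality (equivalently, Bernstein--Walsh for the interval) is applied twice to propagate this geometric bound from $I\times J\subset\RR^2$ to all of $\CC^2$, reducing to Zorn's theorem.

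Note, however, that the paper does not prove Theorem~\ref{Ree}; it is quoted as background leading up to Lelong's characterization (Theorem~\ref{Lelong}) and its higher-dimensional analogue (Theorem~\ref{generalized Lelong}). In the paper's framework Ree's theorem is simply the observation that $E'=\{b/a:(a,b)\in\RR^2,\ a\neq 0\}=\RR$ has positive logarithmic capacity, so Theorem~\ref{Lelong} applies. The paper's route trades the elementary Chebyshev bound for potential-theoretic machinery (Siciak's extremal function, the negligibility of the set where $\limsup u_k<(\limsup u_k)^*$), which is heavier but yields the sharp ``normal set'' characterization and generalizes cleanly to $n$ variables. Your approach is more self-contained and avoids capacity theory entirely, at the cost of giving only the special case $E=\RR^2$ rather than the full Lelong criterion.
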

   
Lelong \cite{Lelong51} introduced the following definition to generalize the 
preceding theorems further:
      
\begin{definition}[\cite{Lelong51}]
\textup{$E\subset \mathbb{C}^2$ is called \textit{normal} if any 
formal power series $S\in \mathbb{C}[[z_1,z_2]]$ enjoying the property
that $S_{a,b}(t):=S(at,bt)\in \CC[[t]]$ has a positive radius of convergence 
for every $(a,b)\in E$ becomes holomorphic on some open neighborhood 
of the origin in $\mathbb{C}^2$.
}
\end{definition}

\subsection{Logarithmic capacity}
In the terminology of  \cite{Lelong51}, what Theorems \ref{Zorn} 
and \ref{Ree} say is that $\mathbb{R}$ and $\mathbb{C}$ are normal sets,
respectively.  For the sake of smooth exposition, we would like to 
cite the following definition.
   
\begin{definition}[cf., \cite{Ransford95}] \label{log capacity}
\textup{Let $\mu$ be a finite Borel measure on $\mathbb{C}$ with compact 
support. Then the $\textit{energy}~ I(\mu)$ of $\mu$ is given by 
$I(\mu)=\iint \log|z-w|d\mu(w) d{\mu}(z)$. The 
$\textit{logarithmic capacity}$ of a subset $E$ in $\mathbb{C}$ is 
defined to be $c(E):=\sup e^{I(\mu)}$, where the supremum is taken 
over the set of all probability measures on $\mathbb{C}$ with 
compact support in $E$.
}
\end{definition}

Then the theorem of Lelong in \cite{Lelong51} states:
   
\begin{theorem}[\cite{Lelong51}]\label{Lelong}
$E\subset \mathbb{C}^2$ is normal if, and only if, 
$E':=\{\frac{z_2}{z_1}\in \mathbb{C}\colon (z_1,z_2)\in E, z_1\neq 0\}$ 
is not contained in any $F_{\sigma}-$set $F$ with $c(F)=0$.
\end{theorem}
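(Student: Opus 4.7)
The plan is to encode the line-convergence condition on the formal series $S=\sum a_{ij}z_1^iz_2^j$ as a growth condition on a sequence of univariate polynomials, and then invoke two classical tools from logarithmic potential theory: the Bernstein-Walsh inequality for the ``if'' direction and the Chebyshev extremal polynomials for the ``only if'' direction. Writing $S=\sum_n H_n$ with $H_n$ homogeneous of degree $n$, and setting $P_n(w):=H_n(1,w)$, one obtains
\[
S(at,bt)=\sum_n a^n P_n(b/a)\,t^n \quad (a\ne 0),
\]
so $S_{a,b}$ has positive radius of convergence precisely when $\limsup_n|P_n(w)|^{1/n}<\infty$ at $w=b/a\in E'$. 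Points of $E$ lying on $\{z_1=0\}$ only constrain the pure $z_2$-coefficients and are handled at the end by a coordinate swap exchanging $z_1$ and $z_2$.

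For the ``if'' direction, suppose $E'$ is not contained in any $F_\sigma$-set of capacity zero. The observation is that
\[
\bigl\{w\in\CC \colon \limsup_n|P_n(w)|^{1/n}<\infty\bigr\}
= \bigcup_{M,N\in\mathbb{N}} A_{M,N},\qquad A_{M,N}:=\bigcap_{n\ge N}\{w:|P_n(w)|\le M^n\},
\]
is an $F_\sigma$-set (each $A_{M,N}$ is closed by continuity of $P_n$) that contains $E'$, so by hypothesis it must have positive logarithmic capacity; hence some $A_{M_0,N_0}$ contains a compact $K$ with $c(K)>0$. The Bernstein-Walsh inequality then yields, for every $\rho>0$,
\[
\sup_{|w|\le\rho}|P_n(w)|\le M_0^n\,\exp\bigl(n\max_{|w|\le\rho}g_K(w,\infty)\bigr)\le C_\rho^n,
\]
uniformly in $n\ge N_0$, where $g_K(\cdot,\infty)$ is the Green's function of the unbounded component of $\CC\setminus K$ with pole at $\infty$. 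Applying Cauchy's estimate to the coefficients of each $P_n$ bounds $|a_{ij}|$ geometrically and produces a polydisc of convergence for $S$.

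For the ``only if'' direction, assume $E'\subset F=\bigcup_k K_k$ with each $K_k$ compact and $c(F)=0$; then $c(\tilde K_k):=c(K_1\cup\cdots\cup K_k)=0$ for every $k$, so the monic Chebyshev polynomials $\tau_{n,k}$ of degree $n$ for $\tilde K_k$ satisfy $\|\tau_{n,k}\|_{\tilde K_k}^{1/n}\to 0$ as $n\to\infty$ by the Fekete-Szegő relation between Chebyshev constants and capacity. A diagonal selection of slowly growing indices $k(n)$ together with rapidly growing multipliers $c_n$ (say $c_n=n^n$) should produce polynomials $P_n:=c_n\tau_{n,k(n)}$ whose restriction to each fixed $K_k$ satisfies $|P_n|\le 1$ for all large $n$, yet whose leading coefficients $c_n$ escape every geometric rate. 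The formal series $S(z_1,z_2):=\sum_n P_n(z_2/z_1)\,z_1^n$ then has pure $z_2^n$-coefficient equal to $c_n$, so it diverges on every neighborhood of $0$; but for each $(a,b)\in E$ with $a\ne 0$, the ratio $w=b/a$ lies in some $K_k$, so $|P_n(w)|\le 1$ eventually and $S(at,bt)$ has positive radius of convergence.

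I expect the main obstacle to be the quantitative calibration of the diagonal in the ``only if'' direction: the three rates (how fast $k(n)\to\infty$, how fast $c_n\to\infty$, and the non-uniform decay $\|\tau_{n,k}\|_{\tilde K_k}^{1/n}\to 0$) must be synchronized so that $|P_n(w)|$ remains controlled on every $K_k$ while $c_n$ still defeats every geometric upper bound. The ``if'' direction, by contrast, should fall out fairly mechanically once the $F_\sigma$-description of the finite-growth locus is in hand and Bernstein-Walsh is accepted as a black box.
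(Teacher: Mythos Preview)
The paper does not prove this theorem at all; it is quoted from \cite{Lelong51} as a black box. What the paper \emph{does} prove is the one-directional higher-dimensional analogue, Theorem~\ref{generalized Lelong}, so the natural comparison is between your ``if'' argument and that proof.

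Your ``if'' direction is correct. You write the finite-growth locus $\{\limsup|P_n|^{1/n}<\infty\}$ as an $F_\sigma$, use the hypothesis to extract a compact $K$ of positive capacity on which $|P_n|\le M_0^n$, and then apply Bernstein--Walsh to push this bound out to any disc. The paper's proof of Theorem~\ref{generalized Lelong} reaches the same uniform bound $|P_k|\le M^k$ on a polydisc by a different route: it studies the plurisubharmonic sequence $u_k=\tfrac1k\log|P_k|$ through the trichotomy of Proposition~\ref{sequence of polynomials}, using the averaged functions $u_k^r$ and a Lipschitz estimate to show that if $\limsup u_k<\infty$ on a set of positive capacity then case~(2-b) is excluded, whence $\{u_k\}$ is locally bounded above. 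Your approach is shorter if Bernstein--Walsh is taken for granted; the paper's is more self-contained, relying only on Theorem~\ref{sequence of pshfunctions} (negligibility of $\{u<u^*\}$). After the uniform bound, both finish identically with Cauchy estimates.

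Your ``only if'' sketch via Chebyshev polynomials is the standard construction and the diagonal calibration you worry about can indeed be carried out (for each $k$ choose $N_k$ with $\|\tau_{n,k}\|_{\tilde K_k}\le n^{-n}$ for $n\ge N_k$, then let $k(n)$ be the largest $k$ with $N_k\le n$). There is, however, a genuine loose end you wave past: your series has pure $z_2^n$-coefficient equal to $c_n=n^n$, so $S(0,bt)=\sum n^n b^n t^n$ diverges for every $b\ne 0$. Hence if $E$ contains any point on the axis $\{z_1=0\}$, your $S$ fails the line-convergence hypothesis there, and a ``coordinate swap'' does not help since it replaces $E'$ by a different set. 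The repair is easy: take instead $H_n(z_1,z_2)=z_1\cdot z_1^{n-1}P_{n-1}(z_2/z_1)$, so that $H_n(0,\cdot)\equiv 0$ while the coefficient of $z_1 z_2^{\,n-1}$ is still $c_{n-1}$, which already prevents convergence of $S$ on any neighborhood of the origin.
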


Notice that the proof of Theorem \ref{CK20a} in the case of complex 
dimension two follows by the arguments up to this point.
     
\section{Cases for standard pencils in all dimensions}

Now we generalize Theorem \ref{Lelong} to all dimensions and give a 
complete proof of Theorem \ref{CK20a}.  We shall follow the line 
of arguments by Lelong \cite{Lelong51} but start with the notion of 
capacity introduced by Siciak.
    
\begin{definition}[\cite{Siciak81}]
\textup{Denote by $\textup{PSH}(\mathbb{C}^n)$ the set of all 
plurisubharmonic functions on $\mathbb{C}^n$. For each positive 
integer $n$ and a subset $E$ of $\mathbb{C}^n$, define
\[ 
\mathcal{L}_n:=\{u\in \textup{PSH}(\mathbb{C}^n):\exists C_u\in \mathbb{R}
~ \text{such that}~u(z)\leq C_u+\textup{log}~(1+\|z\|)
~ \forall z\in \mathbb{C}^n \}, 
\]  
\[
V_E(z):=\textup{sup}\{u(z)\colon u\in \mathcal{L}_n ,u\leq 0 ~\text{on}~ E\},
~ \forall z\in \mathbb{C}^n.
\]
Then the $\textit{logarithmic capacity}$ of $E$ is defined to be 
\[
c_n(E):=e^{-\gamma_n(E)},
\]
where
}
\begin{align*}
\gamma_n(E)&:=\limsup\limits_{\|z\|\to \infty}(V^*_E(z)-\textup{log}\,\|z\|),\\
V^*_E(z)&:=\limsup\limits_{w\to z}V_E(w).
\end{align*}
\end{definition}
    
Notice that this concept of logarithmic capacity coincides  with 
the aforementioned logarithmic capacity in the case of $n=1$. 
The following theorem (cf. Sect.\ 3 of \cite{Siciak81}, p.172 of 
\cite{Klimek91}) plays an important role for our arguments:  
    
\begin{theorem}\label{sequence of pshfunctions}
Let $U$ be an open subset of $\mathbb{C}^n$ and $\{u_k\}$ a sequence of  
plurisubharmonic functions on $U$. Define functions $u,u^*$ on $U$ as
\[
u(z):=\limsup\limits_{k\to \infty}u_k(z),\quad 
u^*(z):=\limsup\limits_{U \ni w\to z}u(w).
\]
If $u$ is locally bounded from above, then the set 
$E:=\{z\in U\colon u(z)<u^*(z)\}$ has a vanishing logarithmic
capacity.
\end{theorem}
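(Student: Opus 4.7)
The plan is to prove that $E$ is pluripolar in $U$, from which $c_n(E)=0$ follows by Josefson's theorem (local pluripolarity is equivalent to $V_E^* \equiv +\infty$). First I would localize: since pluripolarity is local and a countable union of pluripolar sets is pluripolar, it suffices to cover $U$ by balls $B\Subset U$ on each of which the family $\{u_k\}$ is uniformly bounded above, and after subtracting a constant I may assume $u_k\le 0$ on $B$. I would also verify that $u^*$ is PSH on $U$, which is standard: the sub-mean-value inequality for each $u_k$ combined with local boundedness and a reverse-Fatou argument transfers the averaged sub-mean-value inequality to $u^*$.

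Next I would introduce the auxiliary sequence of upper envelopes $v_m(z):=\sup_{k\ge m}u_k(z)$ and their usc regularizations $v_m^*:=(v_m)^*$. Each $v_m^*$ is PSH (as the usc regularization of a locally bounded countable upper envelope of PSH functions), the sequence $v_m^*$ is decreasing in $m$, and its pointwise decreasing limit $w:=\lim_m v_m^*$ is PSH (or $\equiv -\infty$). Since $v_m\ge u$ and $v_m^*\ge v_m$ everywhere, one has $w\ge u$; because $w$ is usc, taking $\limsup$ to $z$ yields $w\ge u^*$.

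The key structural observation is the following: on the complement of the irregular set
\[
N:=\bigcup_{m=1}^{\infty}\{z\in U:v_m(z)<v_m^*(z)\},
\]
we have $v_m(z)=v_m^*(z)$ for every $m$, so $u(z)=\lim_m v_m(z)=\lim_m v_m^*(z)=w(z)\ge u^*(z)$, forcing $u(z)=u^*(z)$. Therefore $E=\{u<u^*\}\subseteq N$, and the problem reduces to showing $N$ is pluripolar, i.e., to showing that for each $m$ the discrepancy set $\{v_m<v_m^*\}$ is pluripolar.

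The main obstacle is precisely this last point: the negligibility theorem for a countable supremum of PSH functions, namely that if $v=\sup_k u_k$ for a locally bounded sequence of PSH functions, then $\{v<v^*\}$ is pluripolar. This is the technical core of the Siciak--Klimek pluripotential theory cited in the statement; one proof proceeds via estimates on the Monge--Ampère capacity (Bedford--Taylor), and an alternative proof uses a Choquet-type envelope construction based on Siciak's extremal function $V_E^*$. Accepting this result, the bookkeeping finishes the argument: $N$ is a countable union of pluripolar sets hence pluripolar, $E\subseteq N$ is pluripolar, and Josefson's theorem converts this to $c_n(E)=0$, which is the desired vanishing of logarithmic capacity.
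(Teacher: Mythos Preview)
The paper does not supply its own proof of this theorem: it is quoted as a known result with the citation ``(cf.\ Sect.\ 3 of \cite{Siciak81}, p.~172 of \cite{Klimek91})'' and is then used as a black box in the proof of Proposition~\ref{sequence of polynomials}. So there is nothing in the paper to compare your argument against.

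That said, your outline is the standard one found in those very references. The reduction $E\subseteq N=\bigcup_m\{v_m<v_m^*\}$ via the decreasing envelopes $v_m=\sup_{k\ge m}u_k$ is correct (the key identity $u=\inf_m v_m$ and the chain $v_m^*\ge u^*$ hence $w=\lim_m v_m^*\ge u^*$ are exactly as you wrote), and you rightly isolate the Bedford--Taylor negligibility theorem for countable suprema as the substantive input. One small terminological quibble: what you call Josefson's theorem is really two facts bundled together---Josefson's theorem proper (locally pluripolar $\Rightarrow$ globally pluripolar in $\CC^n$) and Siciak's characterization (pluripolar $\Leftrightarrow V_E^*\equiv+\infty$, hence $c_n(E)=0$). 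Both are standard and available in the cited sources, so this does not affect the validity of your sketch.
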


Now we investigate the behavior of the sequence 
$\{\frac{1}{k}\text{log}\,|P_k(z)|\}$ of plurisubharmonic functions.
\begin{proposition}\label{sequence of polynomials}
Let $\{P_k\}\subset \mathbb{C}[z_1,\ldots,z_n]$ be a sequence of 
polynomials with $\textup{deg}\,P_k\leq k$ for each positive integer $k$. 
Let $z=(z_1,\ldots,z_n)\in \mathbb{C}^n$, 
$r=(r_1,\ldots,r_n)\in {\mathbb{R}^n_{+}}$, where 
\[
{\mathbb{R}^n_{+}}:=\{(r_1,\ldots,r_n)\in \mathbb{R}^n\colon r_i>0
~\text{for each}~i = 1,\ldots,n\}.
\]
Also, define a sequence of plurisubharmonic functions 
\(
u_k(z):=\frac{1}{k} \textup{log}\, |P_k(z)|
\)
on $\mathbb{C}^n$ and their average 
\[
u^{r}_k(z):=\frac{1}{(2\pi)^n}\int_{0}^{2\pi}\cdots\int_{0}^{2\pi}
P_k(z_1+r_1e^{i\theta_1},\ldots,z_n+r_ne^{i\theta_n})\ d\theta_1
\cdots d\theta_n
\] 
on the distinguished boundary of the polydisc 
$P^n(z;r):=\{(w_1,\ldots,w_n)\in \mathbb{C}^n
\colon |w_i-z_i|<r_i~\text{for all}~ i =1,\ldots,n\}$ of polyradius $r$ 
centered at $z$. Then the following hold:\\
 	
\begin{enumerate}	
\item Let $r_0\in (0,\infty)$ be given. If $r,s\in \mathbb{R}^n_+$ 
and $r_i, s_i>r_0,~ \forall i = 1,\ldots,n$, then
\begin{equation}\label{estimate n}
|u^{r}_k(z)-u^{s}_k(w)|
< \frac{1}{r_0}(|r-s|+|z-w|)
\end{equation}
for all positive integer $k$ and $z,w\in \mathbb{C}^n$, where 
$|z-w|:=\sum_{i=1}^{n}|z_i-w_i|.$\\
\item Fix $r=(r_1,\ldots,r_n) \in \RR_+^n$ 
and let $\alpha_{r}=
\limsup_{k\to \infty}u^{r}_k(0)$. Then one and the only one of the following 
mutually exclusive cases is valid:
\\
\begin{enumerate}
\itemsep 0.4em
 		
\item $\alpha_{r}= -\infty$ and $\{u_k\}$ is uniformly bounded from above on each compact subset of $\mathbb{C}^n$.		
\item $\alpha_{r}= +\infty$ and $u(z)=\limsup_{k\to \infty}u_k(z)=\infty$ except on a set of vanishing logarithmic capacity.
 		
\item $-\infty<\alpha_{r}<\infty$ and $\{u_k\}$ is uniformly bounded from above on each compact subset of $\mathbb{C}^n$. \\
\end{enumerate}	
\end{enumerate}
\end{proposition}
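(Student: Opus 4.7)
The plan is to split the proof into a computational Lipschitz estimate (Part (1)) and a trichotomy argument (Part (2)) that bootstraps from Part (1) and Theorem \ref{sequence of pshfunctions}. Throughout I read $u_k^r(z)$ as the circular mean of $u_k$ over the distinguished boundary of $P^n(z;r)$, which is the natural object for which the submean inequality $u_k(z)\le u_k^r(z)$ and monotonicity in $r$ hold from plurisubharmonicity.

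For Part (1), I would first dispose of the one-variable case via Jensen's formula. Factoring $P_k(w)=c_k\prod_{j=1}^{d_k}(w-a_j)$ with $d_k\le k$ gives
\[
\frac{1}{2\pi}\int_0^{2\pi}\log|P_k(z+re^{i\theta})|\,d\theta
= \log|c_k| + \sum_{j=1}^{d_k}\max\bigl(\log|z-a_j|,\,\log r\bigr).
\]
Each map $(z,r)\mapsto\max(\log|z-a_j|,\log r)$ is $1/r_0$-Lipschitz on $\mathbb{C}\times(r_0,\infty)$, since its partial derivatives are either $0$ or bounded by $1/r_0$. Summing over $\le k$ zero-contributions and dividing by $k$ gives the one-dimensional version of \eqref{estimate n}. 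For general $n$ I would iterate coordinate by coordinate: for fixed values of the other variables, the slice polynomial still has degree $\le k$ in the chosen variable, so the one-dimensional bound applies, and averaging over the remaining angles preserves it; summing the $n$ resulting Lipschitz contributions yields \eqref{estimate n}.

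For Part (2), the cases $\alpha_r=-\infty$ and $-\infty<\alpha_r<\infty$ are immediate from Part (1) together with the submean inequality $u_k\le u_k^s$. If $\alpha_r=-\infty$, Part (1) forces $u_k^s(z)\to-\infty$ uniformly on compacts in $z$ and in $s$ with $s_i>r_0$, and subharmonicity transfers this to $u_k$. If $-\infty<\alpha_r<\infty$, the Lipschitz bound gives $u_k^s(z)\le\alpha_r+1+C(r,s,z)$ for all large $k$, and once more $u_k\le u_k^s$ transfers this upper bound.

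The substantive case is $\alpha_r=+\infty$. I would pass to a subsequence $k_j$ along which $\beta_j:=u_{k_j}^r(0)\to+\infty$ and normalize $v_j:=u_{k_j}-\beta_j$. Each $v_j$ is plurisubharmonic with $v_j^r(0)=0$, and by Part (1) and subharmonicity the family $\{v_j\}$ is locally uniformly bounded above. Applying Theorem \ref{sequence of pshfunctions} to $\{v_j\}$ gives that $\{v<v^*\}$ has vanishing logarithmic capacity, where $v=\limsup_j v_j$ and $v^*$ is its upper regularization. A reverse-Fatou argument applied to $M-v_j\ge 0$ on the distinguished torus $T$ around $0$, combined with $v_j^r(0)=0$, yields $\int_T v\,d\sigma\ge 0$; hence $v^*\not\equiv-\infty$, and so $\{v^*=-\infty\}$ is pluripolar. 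Off the capacity-zero set $\{v<v^*\}\cup\{v^*=-\infty\}$ one has $v(z)>-\infty$, and then $\beta_j\to+\infty$ forces $\limsup_j u_{k_j}(z)=+\infty$ there, giving $u(z)=+\infty$ except on a set of vanishing capacity. The main obstacle I expect is exactly this last case: the delicate point is guaranteeing that $v^*\not\equiv-\infty$ so that the exceptional set produced by Theorem \ref{sequence of pshfunctions} is genuinely of capacity zero, and the normalization $v_j^r(0)=0$ is precisely what buys this.
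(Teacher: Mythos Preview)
Your proposal is correct and tracks the paper's proof closely in Part (1) (the paper simply cites Lelong for the one-variable inequality, while you supply the Jensen-formula computation, and both then telescope coordinate-by-coordinate) and in cases (2-a), (2-c).

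The only genuine difference is in case (2-b). You normalize \emph{additively}, setting $v_j=u_{k_j}-\beta_j$ so that $v_j^r(0)=0$, and then need the reverse-Fatou step to rule out $v^*\equiv-\infty$ before invoking Theorem~\ref{sequence of pshfunctions}. The paper instead normalizes \emph{multiplicatively}, setting $v_k=u_{n_k}/u_{n_k}^r(0)$. Because the Lipschitz bound from Part (1) is independent of $k$ while the denominator tends to $+\infty$, one gets $v_k^r(z)\to 1$ for \emph{every} $z$, and hence $v^*\equiv 1$ directly; the inclusion $\{u<\infty\}\subset\{v<1=v^*\}$ then finishes without any Fatou argument or separate treatment of $\{v^*=-\infty\}$. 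Your route is perfectly sound, but the multiplicative normalization is the cleaner shortcut here and is worth knowing.
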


\begin{proof} 
(1) In the case of $n=1$, the following inequality has been established 
in \cite{Lelong51}:
\begin{equation}\label{estimate 1}
|u^r_k(z)-u^s_k(w)|< \frac{1}{r_0}(|r-s|+|z-w|)
\end{equation}	 
for any positive integer $k$, $z,w\in \mathbb{C}$, $r,s>r_0$.

Let 
\begin{align*}
z&=(z_1,\ldots,z_n),~ w=(w_1,\ldots,w_n)\in \mathbb{C}^n,\\
r&=(r_1,\ldots,r_n),~ s=(s_1,\ldots,s_n)\in {\mathbb{R}^n_{+}}
\end{align*}
be given. Define
\[
z'_i=(w_1,\dots,w_i,z_{i+1},\dots,z_n),~
r'_i=(s_1,\dots,s_i,r_{i+1},\dots,r_n)
\]
for each $i\in \{0,\ldots,n\}$ and fix a positive integer $k$. Since $u_k$ 
is subharmonic in each variable separately, the following sequence of 
inequalities follows from the inequality (\ref{estimate 1}): 
\[
|u^{r'_{i-1}}_k(z'_{i-1})-u^{r'_i}_k(z'_i)|<\frac{1}{r_0}(|r_i-s_i|+|z_i-w_i|),
~ i \in\{1,\ldots,n\}.
\]
Then we obtain
\begin{align*}
|u^{r}_k(z)-u^{s}_k(w)|
&\leq 
\sum_{i=1}^{n} |u^{r'_{i-1}}_k(z'_{i-1})-u^{r'_i}_k(z'_i)| \\
&<
\sum_{i=1}^{n}\frac{1}{r_0}(|r_i-s_i|+|z_i-w_i|)\\
&=
\frac{1}{r_0}(|r-s|+|z-w|).
\end{align*}

(2-a) If $\alpha_{r}=-\infty$, then $u^{r}_k(0)\to -\infty$ as $k\to \infty$. 
Let $K$ be a compact subset of $\mathbb{C}^n$. By the inequality 
(\ref{estimate n}) and the sub-mean value inequality $u_k(z)\leq u^{r}_k(z)$,
\[
\lim\limits_{k\to \infty}u_k(z)= -\infty
\]
uniformly in $z\in K$.

(2-b) If $\alpha_{r}=\infty$, then there exists a subsequence 
$\{u^r_{n_k}(0)\}$ of $\{u^r_{k}(0)\}$ such that 
\[
\lim\limits_{k\to \infty}u^{r}_{n_k}(0)=\infty.
\]
Define a sequence $\{v_k(z)\}$ of plurisubharmonic functions on 
$\mathbb{C}^n$ by
\[v_k(z):=\frac{u_{n_k}(z)}{u^{r}_{n_k}(0)}.
\]
It follows from the inequality (\ref{estimate n}) that 
$\lim \limits_{k \to \infty}{v}_k^{r}(z)=1$ for all $z\in \mathbb{C}^n$. 
Set $v(z):=\limsup \limits_{k\to \infty}v_k(z)$ for any $z\in \mathbb{C}^n$. 
One can check directly that
\[
v^*(z):=\limsup \limits_{w\to z}v(w)
=\lim \limits_{r\to 0}(\limsup \limits_{k\to \infty}v_k^{r}(z))=1
\] 
for every $z \in \mathbb{C}^n$. Consequently,
\[
\{z\in \mathbb{C}^n\colon u(z)<\infty \} \subset \{z\in \mathbb{C}^n
\colon v(z)=0<1=v^*(z)\}.
\] 
Therefore, $u(z)= \infty $ for all $z\in \mathbb{C}^n$ except on a set of 
vanishing logarithmic capacity by Theorem \ref{sequence of pshfunctions}.

(2-c) Let $K$ be a compact subset of $\mathbb{C}^n.$ If $\alpha_{r}$ is 
finite, then it follows from the inequality (\ref{estimate n}) that $\{u^{r}_k(z)\}$ 
is uniformly bounded from above on $K$. Therefore, $\{u_k(z)\}$ is also 
uniformly bounded from above on $K$ since $u_k(z)\leq u^{r}_k(z)$ for all 
positive integer $k$ and $z \in \mathbb{C}^n$. 
\end{proof}

\begin{definition}	
\textup{A set $E\subset \mathbb{C}^n$ is called $\textit{normal}$
 if any formal 
power series $S\in \CC[[z_1,$ $\ldots,z_n]]$ for which 
$S_{a_1,\ldots,a_n}(t):=S(a_1t,\ldots,a_nt)\in \CC[[t]]$ has a positive 
radius of convergence $R_{(a_1,\ldots,a_n)}>0$ for every 
$ (a_1,\ldots,a_n)\in E$ becomes holomorphic on some open neighborhood 
of $0$ in $\mathbb{C}^n$.
}

\end{definition}

Now we use:

\begin{theorem}\label{generalized Lelong}
$E\subset \mathbb{C}^n$ is normal if $c_{n-1}(E')\neq 0$, where 
$E':=\{(\frac{z_2}{z_1},\ldots,\frac{z_n}{z_1})\in \mathbb{C}^{n-1}
\colon  (z_1,\ldots,z_n)\in E, z_1\neq 0 \}$.
\end{theorem}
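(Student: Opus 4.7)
The plan is to follow Lelong's original approach from the two-variable case, now leveraging Proposition~\ref{sequence of polynomials} in dimension $n-1$ to handle the statement in arbitrary dimension.

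Given $S = \sum a_\alpha z^\alpha \in \CC[[z_1, \ldots, z_n]]$ satisfying the normality hypothesis, I first decompose $S = \sum_{k\geq 0} P_k$ into its homogeneous components with $\deg P_k = k$. The hypothesis that $S(a_1 t, \ldots, a_n t)$ has positive radius of convergence for every $(a_1, \ldots, a_n) \in E$ translates to
\[
\limsup_{k \to \infty} \tfrac{1}{k}\log|P_k(a_1, \ldots, a_n)| < \infty \quad \text{for every } (a_1, \ldots, a_n) \in E.
\]
The key move is dehomogenization: set $\tilde P_k(\zeta) := P_k(1, \zeta_1, \ldots, \zeta_{n-1})$, a polynomial of degree at most $k$ on $\CC^{n-1}$, and $\tilde u_k(\zeta) := \tfrac{1}{k}\log|\tilde P_k(\zeta)|$. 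For $a = (a_1, \ldots, a_n) \in E$ with $a_1 \neq 0$, homogeneity of $P_k$ gives $\tilde u_k(a_2/a_1, \ldots, a_n/a_1) = \tfrac{1}{k}\log|P_k(a)| - \log|a_1|$, so
\[
\limsup_{k \to \infty} \tilde u_k(\zeta) < \infty \quad \text{for every } \zeta \in E'.
\]

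The heart of the argument is to apply the trichotomy of Proposition~\ref{sequence of polynomials}(2) in dimension $n-1$ to the sequence $\{\tilde u_k\}$, with any convenient polyradius $r \in \RR_+^{n-1}$. Case (b) would force the set $\{\zeta : \limsup_k \tilde u_k(\zeta) < \infty\}$ to be contained in a set of vanishing $(n-1)$-dimensional logarithmic capacity; but that set contains $E'$, contradicting the standing hypothesis $c_{n-1}(E') \neq 0$. Therefore we are in case (a) or (c), so $\{\tilde u_k\}$ is uniformly bounded from above on each compact subset of $\CC^{n-1}$; in particular $\tilde u_k \leq C$ on the closed unit polydisc for some constant $C$ independent of $k$.

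The remaining step is routine: the bound $|\tilde P_k| \leq e^{kC}$ on the unit polydisc of $\CC^{n-1}$, together with Cauchy's inequalities applied to the polynomial $\tilde P_k$, yields $|a_\alpha| \leq e^{|\alpha|C}$ for every multi-index $\alpha$, since each $a_\alpha$ with $|\alpha|=k$ appears as the coefficient of the monomial $\zeta_1^{\alpha_2}\cdots\zeta_{n-1}^{\alpha_n}$ in $\tilde P_k$. Summing then shows that $S$ converges absolutely on a sufficiently small polydisc around the origin. I expect the only genuinely substantive step to be the capacity-based exclusion of case (b); the dehomogenization and the final Cauchy estimates are formal, as the analytical depth has been concentrated in Proposition~\ref{sequence of polynomials}.
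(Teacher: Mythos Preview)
Your proof is correct and follows essentially the same route as the paper: the paper's polynomials $P_k(b)$ are precisely your dehomogenized $\tilde P_k(b)$, and both arguments use Proposition~\ref{sequence of polynomials}(2) to rule out case~(b) via the capacity hypothesis $c_{n-1}(E')\neq 0$, then finish with Cauchy estimates. The only cosmetic differences are that the paper works on a polydisc of polyradius $2r$ rather than the unit polydisc, and makes the final Weierstrass $M$-test estimate slightly more explicit.
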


In fact, more general result was known earlier. See \cite{LevenMol88} and 
the references therein.  But we only need the simple case stated here.  We
choose to provide here the following proof for the sake of smooth reading.

\begin{proof}
Let
\[
S=\sum a_{i_1,\ldots,i_n}{z_1}^{i_1}\cdots {z_n}^{i_n}\in \CC[[z_1,\ldots,z_n]]
\] 
be a formal power series for which 
$S_{a_1,\ldots,a_n}(t):=S(a_1t,\ldots,a_nt)$ has a positive radius of 
convergence $R_{(a_1,\ldots,a_n)}$ $>0$ for every $(a_1,\ldots,a_n)\in E$.
We are to show that $S$ is holomorphic on some open neighborhood 
of $0$ in $\mathbb{C}^n$.
Note that, for any $b=(b_1,\ldots,b_{n-1})\in E'$, the power series 
$S_{1,b_1,\ldots,b_{n-1}}$ converges absolutely and uniformly within
the radius $\frac12 R_{1,b_1,\ldots,b_{n-1}}$.  So it can be rearranged
as follows, with the same radius of convergence: 

\begin{align*}
S_{1,b_1,\ldots,b_{n-1}}(t)&=\sum_{k=0}^{\infty}\Big(
\sum_{i_1,\ldots,i_{n-1}=0}^{k}
a_{k-(i_1+\cdots+i_{n-1}),i_1,\ldots,i_{n-1}}
b_1^{i_1}\cdots b_{n-1}^{i_{n-1}}\Big) t^k \\
&=\sum_{k=0}^{\infty}P_k(b)t^k,
\end{align*}
where
\[
P_k(z):=\sum_{i_1,\ldots,i_{n-1}=0}^{k}a_{k-(i_1+\cdots+i_{n-1}),
i_1,\ldots,i_{n-1}}z_1^{i_1}\cdots z_{n-1}^{i_{n-1}}
\in \mathbb{C}[z_1,\ldots,z_{n-1}].
\]
Notice that $P_k$ is a polynomial of degree less than or equal to 
$k$. 
The root test implies
\[
\limsup\limits_{k\to \infty}\frac{1}{k}\text{log}\, |P_k(b)|=-\text{log}\,
R_{(1,b)}<\infty ~\text{for all}~ b\in E'.
\] 

Choose any ${r}=(r_0,\ldots,r_0) \in \RR_+^n$ and recall that 
$c_{n-1}(E')\neq 0$.  By Proposition \ref{sequence of polynomials}, 
there exists a constant $M>0$ such that 
\[
\frac{1}{k}\,\text{log}\,|P_k(b)|\leq \text{log}\,M,
\] 
or equivalently, 
\[
|P_k(b)|<M^k, ~\forall b \in P^{n-1}(0;2r),
\]
for any positive integer $k$.  Cauchy estimates implies 
\[
|a_{k-(i_1+\cdots+i_{n-1}),i_1,\ldots,i_{n-1}}|
\leq M^k r_0^{-{(i_1+\cdots+i_{n-1})}}
\]
for any multi-index $(i_1,\ldots,i_{n-1})$ satisfying  
$i_1+\cdots+i_{n-1}\leq k$. This yields that
\[
|a_{i_1,\ldots,i_{n}}z_1^{i_1}\cdots z_n^{i_n}|<(\tfrac{1}{2})^{k}
\] 
whenever
\[
|z_1|<\frac{1}{2M},|z_2|<\frac{r_0}{2M},~\cdots~,|z_n|<\frac{r_0}{2M} 
\textrm{ and } i_1+\cdots+i_n=k.
\]
Therefore, we have
\[
\sum_{i_1+\cdots+i_n=k}
|a_{i_1,\ldots,i_{n}}z_1^{i_1}\cdots z_n^{i_n}|<k^n 2^{-k}
\] 
for each positive integer $k$, $(z_1,\ldots,z_n)\in P^n(0;r')$, where 
$r'=(\frac{1}{2M},\frac{r_0}{2M},\ldots,\frac{r_0}{2M})\in \mathbb{R}^n_+$. 
Then we conclude from the Weierstrass $M$-test that the formal power series 
$S$ is convergent on $P^n(0;r')$.
\end{proof}

\textit{Proof of Theorem \ref{CK20a}}. Let 
$f\colon B^n(0;1)\to \mathbb{C} $ be a function that is smooth at the 
origin and holomorphic along a standard pencil $(P_0(U),\psi)$. Then the 
formal Taylor series $S_f$ is of holomorphic type by Corollary 
\ref{holo formal}. Note that given an open subset $U$ of $S^{2n-1}\subset 
\mathbb{C}^n$, the corresponding set $U'$ in 
$\mathbb{C}^{n-1}$ contains an open ball, say $B$, of a positive radius.
Note also that $0\neq c_{n-1}(B)\leq c_{n-1}(U')$. So, $S_f$ is 
holomorphic on $B^n(0;r)$ for some $r>0$ by 
Theorem \ref{generalized Lelong}. Now $f=S_f$ and moreover, 
Hartogs' lemma [\textit{Ibid}] implies that 
there exists a unique holomorphic extension of $f$ on $B^n(0;r)\cup P_0(U)$.
\hfill $\Box$

\section{Case of a general pencil---Proof of Theorem \ref{CK20b}}

\begin{definition} \textup{
By a \textit{subpencil} of a pencil $(\cP(p, U, \Omega), h)$ 
we mean a pencil of the form $(\cP(p, V, \Omega), h')$, 
where $h'$ is a restriction of $h$ to $P_0(V)\cap B^n(0;r)$ for some 
constant $r$ with $0<r\leq 1$ and an open subset $V$ of $U$. 
}
\end{definition}

The key turns out to be in finding an appropriate standard subpencil 
at $p$ along which $f$ is holomorphic. We therefore proceed in two steps
as follows:
\bigskip

\begin{narrower}

\textbf{Step 1.} There is a subpencil on whose underlying set 
$f$ is holomorphic. 

\textbf{Step 2.}  There is a standard subpencil of 
the pencil found in Step 1.

\end{narrower}
\bigskip

\noindent
Notice that it follows by Step 2 and Theorem \ref{CK20a} that 
there is an open neighborhood of $p$ on which $f$ is holomorphic.
This of course proves Theorem \ref{CK20b}, the desired final
conclusion.

\bigskip

\textbf{Step 1.} We may assume without loss of generality that $p=0$ in 
$\mathbb{C}^n$. Now we recapitulate the following result of \cite{JKS13}:

\begin{theorem}[\cite{JKS13}] \label{CR equation}
Let $(\cP(0, U, \Omega), h)$ be a $C^1$ pencil of holomorphic discs at the 
origin in $\mathbb{C}^n$. If $f\colon \Omega \to \mathbb{C}$ satisfies the 
following conditions
\begin{enumerate}
\item $f\in C^{\infty}(0)$, and
\item $f$ is holomorphic along the pencil,
\end{enumerate}
then for each $v\in U$, there exists a positive number $r_v>0$ 
such that $f$ satisfies the Cauchy-Riemann equation on 
$L_v:=\{h(zv)\in \Omega \colon z\in B^1(0;r_v) \}$.
\end{theorem}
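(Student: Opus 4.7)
The plan is to split the proof into two reductions: first, to show that the formal Taylor series of $f$ at $0$ is of holomorphic type in the sense of Proposition~\ref{holo formal}; second, to upgrade this infinite-order vanishing of $\bar\partial f$ at $0$ to pointwise vanishing of $\bar\partial f$ along each disc $L_v$, up to some positive radius $r_v$.

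Setting $g_j(z) := \partial f/\partial \bar z_j(z)$ and applying the chain rule to the composition $f(h(\lambda v))$, which is holomorphic in $\lambda$ by hypothesis (2) and in which $\lambda \mapsto h(\lambda v)$ is itself holomorphic in $\lambda$, one obtains the single intrinsic CR identity
\[
\sum_{j=1}^{n} g_j(h(\lambda v)) \, \overline{\frac{\partial h_j}{\partial \lambda}(\lambda v)} = 0, \qquad v \in U,\ \lambda \in B^1(0;1).
\]
First I would observe that the real-linear derivative $dh(0)\colon \mathbb{C}^n \to \mathbb{C}^n$ is in fact complex-linear: the identity $\partial h(\lambda v)/\partial \bar\lambda \equiv 0$ unpacks via the chain rule to $\sum_i (\partial h/\partial \bar z_i)(\lambda v)\bar v_i = 0$, and evaluating at $\lambda=0$ for $v$ ranging over the open set $U$ forces $(\partial h/\partial \bar z_i)(0)=0$ for every $i$. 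Consequently $dh(0)$ is a complex-linear isomorphism, so $A_1(v) := \partial h(\lambda v)/\partial \lambda|_{\lambda=0} = dh(0)(v)$ ranges over an open subset of $\mathbb{C}^n$ as $v$ varies in $U$. Evaluating the intrinsic CR identity at $\lambda=0$ and invoking openness of $U$ then forces $g_j(0) = 0$ for each $j$. For the inductive step, I would expand $f$ as a formal Taylor series at $0$ and $h(\lambda v) = \sum_{k\ge 1}\lambda^k A_k(v)$ as a power series in $\lambda$ whose coefficients depend continuously on $v$, substitute both into the CR identity, and compare coefficients of $\lambda^a \bar\lambda^b$. An induction on $a+b$, exploiting the openness of $U \subset S^{2n-1}$ together with the non-singularity of $dh(0)$, will then force every mixed partial derivative of $f$ at $0$ involving at least one $\bar z$-derivative to vanish. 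This yields the desired formal-Taylor-series-of-holomorphic-type conclusion.

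For the second reduction, fix $v \in U$ and study the $C^\infty$ functions $G_j(\lambda) := g_j(h(\lambda v))$. The first reduction tells us each $G_j$ is flat at $\lambda = 0$, which by itself does not force $G_j \equiv 0$ on any punctured disc. To finish, I would differentiate the intrinsic CR identity repeatedly in $\bar\lambda$ and in the $v$-parameter (using the $C^1$-diffeomorphism property of $h$) to produce a linear system satisfied by the $G_j$ whose coefficient matrix, controlled by the Taylor expansion of $h$ in $\lambda$, has full rank on a punctured neighborhood of $\lambda = 0$. Combined with the flatness of each $G_j$ at $\lambda = 0$ as a boundary condition, inverting this system should force $G_j \equiv 0$ on a disc $|\lambda| < r_v$ for some $r_v > 0$.

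The main obstacle will lie in the second reduction. In the standard pencil case (Theorem~\ref{CK20a}), the passage from a formal Taylor series of holomorphic type to a genuine holomorphic function on a full neighborhood of $0$ is achieved by Lelong's analytic criterion (Theorem~\ref{generalized Lelong}); but for a general $C^1$ pencil, the coefficients $\overline{\partial h_j/\partial \lambda}$ depend nonlinearly on $(\lambda, v)$, so Lelong's criterion does not apply verbatim. One must instead exploit differentiated forms of the intrinsic CR identity together with the $C^1$-rigidity of the pencil (and, when needed, the complex-linearity of $dh(0)$ in order to invert an appropriate coefficient matrix) to force each $G_j$ separately to vanish along $L_v$.
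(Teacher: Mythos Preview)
Your first reduction is reasonable and, modulo the sketched induction, correct. But the second reduction has a genuine gap: the mechanism you propose for passing from flatness of $G_j$ at $0$ to vanishing on a disc does not work as stated. If you could really produce a \emph{homogeneous} linear system $M(\lambda)(G_1,\ldots,G_n)^{T}=0$ with $M(\lambda)$ invertible for $0<|\lambda|<r_v$, then $G_j\equiv 0$ would follow immediately and flatness would be irrelevant; conversely, for an inhomogeneous system, flatness at $0$ tells you nothing about the right-hand side. Moreover, differentiating the CR identity $\sum_j g_j(h(\lambda v))\,\overline{\partial h_j/\partial\lambda}=0$ in the $v$-parameter does not yield a closed system in the $G_j$ alone: it immediately brings in the second $\bar z$-derivatives of $f$ at $h(\lambda v)$, so the system does not close.

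The idea you are missing, and which the paper (following \cite{JKS13}) uses, is to exploit not a linear-algebraic inversion but the \emph{identity principle for holomorphic functions of $\lambda$}. Since $\lambda\mapsto f(h(\lambda v))$ is holomorphic for each $v$, the $v$-parameter derivatives $\partial F/\partial z_2$ and $\partial F/\partial\bar z_2$ (in the paper's local coordinates $(z_1,z_2)$ with $z_1=\lambda$) are each holomorphic in $z_1$. One then takes the specific combination
\[
G(z_1)=\Big(\frac{\partial k}{\partial\bar z_2}\frac{\partial F}{\partial z_2}-\frac{\partial k}{\partial z_2}\frac{\partial F}{\partial\bar z_2}\Big)\Big|_{(z_1,0)}
= H(z_1)\,\frac{\partial f}{\partial\bar w}(z_1,0),
\]
whose coefficients $\partial k/\partial z_2(z_1,0)$ and $\partial k/\partial\bar z_2(z_1,0)$ are themselves holomorphic in $z_1$; hence $G$ is holomorphic in $z_1$. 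On the other hand, a direct computation shows $H(z_1)=|\partial k/\partial\bar z_2|^2-|\partial k/\partial z_2|^2$ at $(z_1,0)$, and since both factors vanish at $z_1=0$ (because $k(0,\cdot)\equiv 0$) and are holomorphic in $z_1$, one finds $G(z_1)=\bar z_1\cdot g(z_1)$ with $g$ smooth. A holomorphic function whose $C^\infty$ Taylor series is divisible by $\bar z_1$ has vanishing Taylor series, so $G\equiv 0$; since $H$ is nonzero on a punctured disc, $\partial f/\partial\bar w=0$ there, and the remaining CR equation then drops out from $\partial F/\partial\bar z_1=0$. This holomorphicity-plus-$\bar z_1$-divisibility argument is the substitute for your ``boundary condition'' step, and your proposal does not contain it.
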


\begin{proof}
We sketch the proof only for the case of $n=2$, even though it works in 
all dimensions, as in \cite{JKS13}. 

Let $v_0 \in U$ be given. One can choose coordinates around the origin 
in $\mathbb{C}^2$ such that $v_0$ becomes 
$(1,0)$. Fix a number $\epsilon_0>0$  
such that  
\[
\tilde{h}\colon B^1(0;\epsilon_0)\times B^1(0;\epsilon_0)\to \Omega,
\quad \tilde{h}(z_1,z_2):=h(z_1,z_1z_2)
\] 
is well-defined.
By a change of holomorphic coordinates, the map $\tilde{h}$ can be locally 
expressed as 
\[
\tilde{h}\colon B^1(0;\epsilon)\times B^1(0;\epsilon)\to \mathbb{C}, 
\quad \tilde{h}(z_1,z_2)=(z_1,k(z_1,z_2))
\] 
where $\epsilon$ is a positive real number and 
$k\colon B^1(0;\epsilon)\times B^1(0;\epsilon)\to \mathbb{C}$ 
satisfies the following conditions:

\begin{enumerate}
\itemsep 0.5mm 
\item $k(z_1,z_2)$ is $C^1$ in $z_1,z_2$ and holomorphic in $z_1$.
\item $k(0,z_2)=0$ for any $z_2\in B^1(0;\epsilon) $.
\item $\frac{\partial k(z_1,z_2)}{\partial z_1}\vert_{z_1=0}=z_2$ and 
hence $k(z_1,z_2)=z_1z_2+o(|z_1|).$
\end{enumerate}
Note that $\tilde{h}(z_1,0)=(z_1,0)$ represents $h_{v_0}\colon
z\in B^1(0;\epsilon)\to h(zv_0)\in \Omega.$

Let $f=f(z,w)$ be a function satisfying the given assumptions and 
let $F(z_1,z_2):=f(z_1,k(z_1,z_2))$. 
Then
\[
\frac{\partial F}{\partial z_2}=\frac{\partial f}{\partial w}
\frac{\partial k}{\partial z_2}+\frac{\partial f}{\partial \bar{w}}
\frac{\partial \bar{k}}{\partial z_2},
\]
\[
\frac{\partial F}{\partial \bar{z}_2}=\frac{\partial f}{\partial w}
\frac{\partial k}{\partial \bar{z}_2}+\frac{\partial f}{\partial \bar{w}}
\frac{\partial \bar{k}}{\partial \bar{z}_2}.
\]	
Since $\frac{\partial k}{\partial z_2}(z_1,0)$ and 
$\frac{\partial k}{\partial \bar{z}_2}(z_1,0)$ are holomorphic in 
$z_1$ (see p.1173--1174 of \cite{JKS13}), it follows that
\[
\Big(\frac{\partial k}{\partial \bar{z}_2}\frac{\partial F}{\partial z_2}
-\frac{\partial k}{\partial z_2}\frac{\partial F}{\partial \bar{z}_2}\Big)
\Big|_{(z_1,0)}
\] 
is also holomorphic in $z_1$.  Notice that this last is equal to 
\[
H(z_1)\frac{\partial f}{\partial \bar{w}}(z_1,0),
\]
where
\[
H(z_1):=\Big(\frac{\partial k}{\partial \bar{z}_2}
\frac{\partial \bar{k}}{\partial z_2}
-\frac{\partial k}{\partial z_2}\frac{\partial \bar{k}}{\partial \bar{z}_2}
\Big)(z_1,0).
\]
Let $G(z_1) = H(z_1)\frac{\partial f}{\partial \bar{w}}(z_1,0)$.
By a direct computation, $G$ turns out to coincide with $\bar{z}_1g(z_1)$, 
where $g$ is a smooth function. Hence the Taylor series of $G$ at 
$0\in \mathbb{C}$ vanishes identically, which implies that $G\equiv 0$.  
It holds also that $H(z_1)$ is nowhere zero in a punctured neighborhood
of $0$.  Altogether, it follows that there exists $r>0$ such that 
\[
\frac{\partial f}{\partial \bar{w}}(z_1,0)=0~\text{whenever}~ |z_1|<r,
\]
which implies
\[
0=\frac{\partial F}{\partial\bar{z}}(z_1,0)
= (\frac{\partial f}{\partial\bar{z}_1}+\frac{\partial f}{\partial\bar{w}}
\frac{\partial \bar{k}}{\partial\bar{z_1}})(z_1,0)
= \frac{\partial f}{\partial\bar{z}}(z_1,0) 
\]
for any $z_1$ with $|z_1|<r$. Therefore, $f$ satisfies the Cauchy-Riemann  
equation on $L_{v_0}$.
\end{proof}

Moreover, we have 

\begin{proposition}
Let $(\cP (0, U, \Omega),h)$ and $f:\Omega\subset \mathbb{C}^n \to \CC$ 
be given as in Theorem \ref{CR equation}. Then there exists a subpencil 
$(\cP (0, V, \Omega),h')$ which $f$ satisfies the Cauchy-Riemann 
equation at every point of the underlying set of.
\end{proposition}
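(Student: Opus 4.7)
The plan is to apply a Baire category argument to the pointwise radii $r_v$ supplied by Theorem \ref{CR equation}. Since $f\in C^\infty(0)$, in particular $f$ is of class $C^1$ on some open neighborhood $W$ of $0$, so $\bar{\partial} f$ is continuous on $W$ and the zero set $Z:=\{q\in W:\bar{\partial}f(q)=0\}$ is relatively closed in $W$. Theorem \ref{CR equation} says precisely that for each $v\in U$ there is $r_v>0$ with $h(\{zv:|z|<r_v\})\subset Z$. The task is to upgrade this pointwise-in-$v$ statement to a single radius that works uniformly on some nonempty open subset $V\subset U$.

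First I would define, for each positive integer $m$,
\[
U_m:=\{v\in U : h(zv)\in Z \text{ for every } z\in B^1(0;1/m)\},
\]
and check that $U_m$ is closed in $U$. Indeed, if $v_k\in U_m$ and $v_k\to v\in U$, then for any fixed $z_0\in B^1(0;1/m)$ continuity of $h$ gives $h(z_0 v_k)\to h(z_0 v)$; each $h(z_0 v_k)$ lies in $Z$, so since for large $k$ these points stay in a small compact neighborhood of $h(z_0 v)$ inside $W$ and $Z$ is closed in $W$, the limit $h(z_0 v)$ lies in $Z$, whence $v\in U_m$. Theorem \ref{CR equation} then gives $U=\bigcup_{m\geq 1}U_m$, since one may take any integer $m>1/r_v$.

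Now $U$, being an open subset of $S^{2n-1}$, is a locally compact Hausdorff space and hence a Baire space. By the Baire category theorem, some $U_{m_0}$ has nonempty interior $V\subset U$. Setting $r:=1/m_0$ and $h':=h|_{P_0(V)\cap B^n(0;r)}$, the pair $(\cP(0,V,\Omega),h')$ is by definition a subpencil of $(\cP(0,U,\Omega),h)$, and by construction $\bar{\partial}f$ vanishes identically on its underlying set $h'(P_0(V)\cap B^n(0;r))$, which is the desired conclusion.

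The main obstacle is the closedness of the sets $U_m$, which rests on two distinct continuity inputs: continuity of $\bar{\partial}f$ near $0$ (from the $C^1$ regularity of $f$ implicit in the hypothesis $f\in C^\infty(0)$, or in the weakened hypothesis described in the remark) and continuity of the parametrization $h$ on $P_0(U)$ (from the $C^1$ pencil assumption). This is consistent with the authors' remark at the end of Section 2 that the method does not seem to survive the removal of the $C^1$ hypothesis on the foliation.
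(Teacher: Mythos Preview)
Your proof is correct and follows essentially the same approach as the paper: define the closed sets $U_m$ (the paper calls them $V_\ell$), cover $U$ using Theorem~\ref{CR equation}, and apply Baire category to obtain a nonempty open $V$ on which a uniform radius works. Your write-up is in fact somewhat more careful than the paper's, as you spell out the closedness argument and the Baire property of $U$ explicitly.
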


\begin{proof}
For each positive integer ${\ell}$, define
\[
V_{\ell}:=\{v\in U\colon \frac{\partial f}{\partial \bar{z}_k}(h(zv))\equiv 0~ 
\text{for all}~ k\in \{1,\ldots,n\},~ z\in B^1(0;\frac{1}{\ell})\}.
\]
Since $f$ is $C^1$ on $B^n(0;r)$ for some $r>0$, each $V_{\ell}$ is closed 
and by Theorem \ref{CR equation}, 
\[
V=\bigcup_{\ell} V_{\ell}
\] 
where the union is taken over the set of all positive integers. By Baire 
category theorem, there exists a positive integer $m$ such that $V_m$ 
contains some nonempty open subset $V$ of $U$. 
Then $f$ satisfies the Cauchy-Riemann equation on
\[
\cP (0, V, \Omega):=\Big\{h(zv)\in \Omega\colon 
z\in B^1(0;\frac{1}{m}), v\in V\Big\},
\]
as desired.
\end{proof}

\textbf{Step 2.}
We are ready to complete the proof of Theorem \ref{CK20b}.  Recall that
it remains only to establish the existence of a standard subpencil of 
$\cP(0, V, \Omega)$ along which $f$ is holomorphic.

\begin{theorem}
Let $(\cP(0, V, \Omega),h)$ be a $C^1$ pencil of holomorphic discs at the 
origin of $\mathbb{C}^n$ and $W$ a relatively compact open subset of $V$. 
Then $P_0(W)\cap B^n(0;r)\subset\cP(0, V, \Omega)$ for some $r>0$. 
\end{theorem}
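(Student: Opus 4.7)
The plan is to use a contraction-mapping argument to invert the parameterization $h$ near the origin, and then to use the relative compactness $W\subset\subset V$ to verify that the preimage lies inside $P_0(V)$.

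First, I note that $dh(0)$ is $\mathbb{C}$-linear and invertible. Writing the real differential as $dh(0)\xi = A\xi+B\bar\xi$ with $A$ complex-linear and $B$ complex-antilinear, the holomorphy of the discs $\lambda\mapsto h(\lambda v)$ at $\lambda=0$ forces $B\bar v=0$ for every $v\in V$, which in turn forces $B\equiv 0$ since $V$ is open in $S^{2n-1}$. Invertibility follows because $h$ is a diffeomorphism onto its image. After post-composing $h$ with $A^{-1}$ (a linear change of coordinates in the target), I may assume $dh(0)=I$, and write $h(z)=z+\epsilon(z)$ with $d\epsilon(0)=0$ and $\epsilon(z)=o(\|z\|)$.

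Next, fix $u\in W$ and small $\lambda\in B^1(0;1)\setminus\{0\}$. To solve $h(\lambda v)=\lambda u$ for $v$ close to $u$, define the map
\[
T_\lambda(v):=u-\frac{\epsilon(\lambda v)}{\lambda}
\]
on a small closed ball $\overline B(u;\rho)\subset\CC^n$. Since $d\epsilon(z)\to 0$ as $z\to 0$, the Lipschitz constant of $T_\lambda$ is strictly less than $1$ on this ball for $|\lambda|$ small, uniformly in $u$ ranging over the compact set $\overline W$; moreover $\|T_\lambda(u)-u\|=\|\epsilon(\lambda u)\|/|\lambda|\to 0$ uniformly in $u\in\overline W$. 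Banach's fixed point theorem then produces a unique $v^{*}=v^{*}(\lambda,u)\in\overline B(u;\rho)$ satisfying $h(\lambda v^{*})=\lambda u$, with $v^{*}\to u$ uniformly in $u\in\overline W$ as $\lambda\to 0$. (The iteration stays in the domain of $h$ because for $v$ near $u\in W\subset V\subset U$, the direction of $\lambda v$ lies in $S^{1}\cdot U$, so $\lambda v\in P_0(U)$.)

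Finally, write $w:=\lambda v^{*}=\mu\tilde v$ with $\mu:=\lambda\|v^{*}\|\in\CC$ and $\tilde v:=v^{*}/\|v^{*}\|\in S^{2n-1}$, so that $\tilde v\to u$ uniformly in $u\in\overline W$. Relative compactness $\overline W\subset V$ supplies a positive distance $\delta>0$ between $\overline W$ and $S^{2n-1}\setminus V$; hence for $|\lambda|$ below a common threshold $r>0$ and uniformly in $u\in W$, $\tilde v\in V$ and $|\mu|<1$. Consequently $w=\mu\tilde v\in P_0(V)$ and $h(w)=\lambda u\in\cP(0,V,\Omega)$, which gives $P_0(W)\cap B^{n}(0;r)\subset\cP(0,V,\Omega)$.

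The main obstacle is the verification in the last paragraph: one needs the perturbed direction $\tilde v$ to sit in $V$, not merely close to $u$. The compactness of $\overline W$ inside the open $V$ provides the uniform "safety margin" $\delta$, and the uniform-in-$u$ convergence $v^{*}\to u$ (from the contraction estimates) is what lets a single $r>0$ work simultaneously for all $u\in W$. Without $W\subset\subset V$ the fixed point could drift to $\partial V$ as $u$ approaches the boundary, and the argument would break down.
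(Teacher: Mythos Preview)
Your normalization $dh(0)=I$ via post-composition with $A^{-1}$ is not harmless: it sends $\cP(0,V,\Omega)=h(P_0(V))$ to $A^{-1}h(P_0(V))$ while $P_0(W)$ stays fixed, so unwinding your argument actually yields $A\bigl(P_0(W)\bigr)\cap B^n(0;r')\subset\cP(0,V,\Omega)$ rather than the stated inclusion. Indeed, as literally stated the theorem fails: take $h(z)=Az$ with $A$ a unitary carrying a small $S^1$-invariant $V$ to a region of $S^{2n-1}$ disjoint from $V$; then $\cP(0,V,\Omega)=P_0(AV)$ meets $P_0(W)$ only at the origin, for every $r>0$. The paper's own proof carries the same tacit assumption---its key limit $\lim_k h(z'_kv'_k)/z'_k$ equals $dh(0)\,v'$, not $v'$, in general---so both arguments really establish the weaker (and, for the application to Theorem~\ref{CK20b}, entirely sufficient) fact that \emph{some} standard pencil $P_0(W')\cap B^n(0;r)$ sits inside $\cP(0,V,\Omega)$, with $W'$ lying near the normalized image $A\overline W/\|\cdot\|$.

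Granting $dh(0)=I$, your contraction-mapping construction is correct and follows a genuinely different route from the paper. The paper argues by contradiction and sequential compactness: assuming failure for every $r=1/m$, it uses local invertibility of $h$ at $0$ to write the offending points as $z_kv_k=h(z'_kv'_k)$ with $v_k\in W$ and $v'_k\in S^{2n-1}\setminus V$, passes to subsequential limits $v\in\overline W$, $v'\in S^{2n-1}\setminus V$, and from the derivative computation obtains $v'\in S^1\cdot v\subset V$, a contradiction. Your direct inversion is more constructive and makes the uniformity over $\overline W$ explicit through the contraction estimates; the paper's compactness argument is shorter but non-quantitative.
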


\begin{proof}
Suppose that 
\[
P_0(W)\cap B^n(0;\tfrac{1}{m})\not\subset \cP(0, V, \Omega)
\]
for any positive integer $m$. Fix $v \in W$. Since $h$ is a homeomorphism, 
there exists $s_v>0$ such that $zv\in h(B^1(0;1))$, for any $z\in \CC$
with $|z|<s_v$. 
Then there exist sequences
\[
\{z_k\}\subset B^1(0;1),~\{z'_k\}\subset B^1(0;1),
~\{v'_k\}\subset S^{2n-1}\backslash V,~ \{v_k\}\subset W
\]
satisfying the following conditions:
    
\begin{enumerate}
\item $z_kv_k=h(z'_kv'_k)\notin \cP(0, V, \Omega)$ for each 
positive integer $k$,
\item $\lim_{k\to\infty} z_k = 0 = \lim_{k\to\infty} z'_k$.
\end{enumerate}
Taking subsequences whenever necessary, we may assume without 
loss of generality that 
\[
\lim\limits_{k \to \infty}v'_k= v' \in S^{2n-1}  \text{ and }~ 
\lim\limits_{k \to \infty}v_k= v \in \overline{W}\subset V.
\]
Furthermore, we see that
\[
\big(\lim\limits_{k \to \infty}\tfrac{z_k}{z'_k}\big)v
=\lim\limits_{k \to \infty}\big\{\tfrac{h(z'_kv'_k)}{z'_k}\big\}=v'.
\]
Therefore, there exists a real number $\theta$ such that 
$e^{i\theta}v'=v\in V$.  Since $\lim_{k\to\infty} v'_k = v'$, there is 
a positive integer $N$ such that $e^{i\theta}v'_N\in V$.  This implies that
\[
z_Nv_N=h(z'_Nv'_N)=h(e^{-i\theta}z'_Ne^{i\theta}v'_N)\in \cP(0, V, \Omega),
\]
contradicting (1) above.  This completes the proof. 
\end{proof}

\section*{Data availability statement}

This article does not use any associated data.

\vspace{50pt}

Ye-Won Cho (\texttt{ww123hh@postech.ac.kr}) 

Kang-Tae Kim (\texttt{kimkt@postech.ac.kr})
\medskip

Department of Mathematics, 

Pohang University of Science and 
Technology (POSTECH), 

Pohang 37673, The Republic of Korea (South).

\end{document}